%------------------------------------------------------------------------------
% Beginning of journal.tex
%------------------------------------------------------------------------------
%
% AMS-LaTeX version 2 sample file for journals, based on amsart.cls.
%
%        ***     DO NOT USE THIS FILE AS A STARTER.      ***
%        ***  USE THE JOURNAL-SPECIFIC *.TEMPLATE FILE.  ***
%
% Replace amsart by the documentclass for the target journal, e.g., tran-l.
%
\documentclass{amsart}

%     If your article includes graphics, uncomment this command.
\usepackage{graphicx}
\usepackage{xcolor}

\newtheorem{theorem}{Theorem}[section]
\newtheorem{lemma}[theorem]{Lemma}
\newtheorem{prop}[theorem]{Proposition}
\newtheorem{cor}[theorem]{Corollary} 

\theoremstyle{definition}
\newtheorem{definition}[theorem]{Definition}
\newtheorem{example}[theorem]{Example}

\theoremstyle{remark}
\newtheorem{remark}[theorem]{Remark}

\numberwithin{equation}{section}

\DeclareMathOperator*{\argmin}{arg\,min}

\newcommand{\h}{\mathcal{H}}
\newcommand{\D}{\mathbb{D}}
\newcommand{\C}{\mathbb{C}}

\newcommand{\ksf}{\mathcal{K}_{Sf}}
\newcommand{\bb}{\overline{\beta}}

\newcommand{\editA}[1]{{\color{black}#1}}
\newcommand{\editB}[1]{{\color{black}#1}}

%    Absolute value notation

%    Blank box placeholder for figures (to avoid requiring any
%    particular graphics capabilities for printing this document).

\begin{document}

\title[General OPAs, Stabilization, and Projections of Unity]{General Optimal Polynomial Approximants, \\Stabilization, and Projections of Unity}

%    Information for first author
\author{Christopher Felder}
%    Address of record for the research reported here
\address{Department of Mathematics and Statistics, Washington University In St. Louis, St. Louis, Mo, 63136}
\email{cfelder@wustl.edu}

%    General info
\subjclass[2010]{Primary 46E22; Secondary 30J05}
\date{\today}

\keywords{Optimal polynomial approximants, inner functions}

\begin{abstract}
In various Hilbert spaces of analytic functions on the unit disk, we characterize when a function has optimal polynomial approximants given by truncations of a single power series or, equivalently, when the approximants stabilize. We also introduce a generalized notion of optimal approximant and use this to explicitly compute orthogonal projections of 1 onto certain shift invariant subspaces. 
\end{abstract}
\maketitle
\section{Background, Introduction, and Notation}
Throughout this paper $\h$ will be a reproducing kernel Hilbert space of analytic functions on the unit disk $\D$. We will denote the reproducing kernel for $\h$ as $k_\lambda(z) = k(z,\lambda)$ and the normalized reproducing kernel as $\hat{k}_\lambda = k_\lambda/\|k_\lambda\|_\h$. That is, a priori, for $\lambda \in \D$, we have $f(\lambda) = \langle f, k_\lambda \rangle_\h$. Further, we will assume that $\h$ satisfies the following: 
\begin{enumerate}
\item The polynomials $\mathcal{P}$ are dense in $\h$. 
\item The forward shift $S$, mapping $f(z) \mapsto zf(z)$, is a bounded operator on $\h$. 
\end{enumerate}
When $V \subseteq \h$ is a closed subspace, we will use $\Pi_V : \h \to V$ to denote the orthogonal projection from $\h$ onto $V$. For $n \in \mathbb{N}$, we will denote by $\mathcal{P}_n$ the set of complex polynomials of degree less than or equal to $n$. For $f \in \h$, we define $f\mathcal{P}_n := \{ pf : p \in \mathcal{P}_n\}$. Note that $f\mathcal{P}_n$ is always a closed finite-dimensional subspace of $\h$. When $f$ is fixed, we will use $\Pi_n : \h \to f\mathcal{P}_n$ to denote the orthogonal projection onto $f\mathcal{P}_n$. 

\subsection{Cyclicity and Shift Invariant Subspaces}
The results to come are born from the study of shift invariant subspaces and cyclic functions. We say a subspace $V \subseteq \h$ is \textit{shift invariant} if $SV \subseteq V$. We say a function $f \in \h$ is \textit{cyclic} (in $\h$) if 
$$[f] := \overline{\text{span}\{z^nf : n \ge 0\}}^{\ \h}$$ 
is equal to $\h$ itself. Note that $[f]$ is a (possibly trivial) shift invariant subspace and is the smallest closed subspace of $\h$ containing $f$. 
%It is also easily seen that if $f$ is cyclic, then $f$ cannot vanish in $\D$. 
In \cite{brown1984cyclic}, it was pointed out that $f \in \h$ is cyclic if and only if, for any cyclic function $g \in \h$, there exist polynomials $(p_n)_{n \ge 0}$ so that $\| p_nf - g \|_{\h} \to 0$. From this equivalence, and taking $g = 1$ in spaces where $1 = k_0$, the study of \textit{optimal polynomial approximants} has arisen. The optimality referred to here is with respect to the distance between $f \mathcal{P}_n$ and 1, i.e., 
\[
\min_{p \in \mathcal{P}_n} \| pf - 1 \|_\h.
\] 
The \editA{element of $f\mathcal{P}_n$} minimizing this distance will be denoted $p_n^*f$ (details to come in Section \ref{genopa}).

\editA{Approximation problems of this kind were first studied under the engineering lens of filter design in the 1970's and 80's, referred to as \textit{least squares inverses} (see, e.g. \cite{1162358, MR576280, 1163842}). It seems this body of work was not known to mathematicians prior to the discussion in \cite{beneteau2019boundary}.}

A \editA{modern} jumping off point for optimal approximants could be considered the work in \cite{fricain2014cyclicity}; the authors study the optimal approximants of the function $1-z$ in order to characterize the cyclicity of holomorphic functions on the closed unit disk. 
In \cite{beneteau2019boundary}, the authors compute Taylor coefficients of $1 - p_n^*f$ in weighted Hardy spaces (discussed below) when $f$ is a polynomial, proving results about the convergence of $( 1 - p_n^*f )$. 

\editA{In \cite{beneteau2016zeros}, the authors study a larger class of reproducing kernel Hilbert spaces and give results on accumulation points, along with lower bounds on the moduli of zeros of optimal approximants. 
Then in \cite{beneteau2016orthogonal}, the authors dive into orthogonal polynomials and reproducing kernels in order to get lower bounds on the moduli of zeros of optimal approximants in Dirichlet-type spaces. }

Following these themes, we would like to develop some theory for different choices of $g$ (cyclic or not) in considering $\|pf - g \|_\h$, and then explore the relationship between optimal approximants and generalized \textit{inner} functions (\editB{this relationship first studied in \cite{beneteau2017remarks}}). This will then yield some observations which allow us to explicitly compute $\Pi_{[f]}(1)$ when $f$ is a polynomial. 

\editB{
In particular:
\begin{itemize}
\item Section \ref{genopa} develops the framework necessary for handling general optimal approximants. 
\item Section \ref{stable} deals with \textit{stabilization} of optimal approximants to $\hat{k}_0/f$, with Theorem \ref{main} characterizing when $p_n^*f = p_M^*f$ for all $n$ great than some fixed $M\ge 0$. 
\item Section \ref{g-opas} discusses stabilization of general optimal approximants, with Theorem \ref{gen-main} giving a version of Theorem \ref{main} for general approximants. 
\item Section \ref{projofunity} develops the theory of \textit{reproducible points}, and then returns to certain spaces where $\hat{k}_0 = 1$, with Theorem \ref{projun} providing an explicit description of the projection of 1 onto the shift invariant subspace generated by a polynomial.
\end{itemize}

Many of the themes of this paper follow from those in \cite{beneteau2017remarks}. The authors there show that inner functions correspond to constant optimal approximants and investigate certain inner functions that arise as linear combinations of reproducing kernels. 
}

We conlcude this section by mentioning some spaces where assumptions (1) and (2) from above hold.

%A jumping off point for optimal approximants could be considered the work in \cite{fricain2014cyclicity}; the authors study the optimal approximants of the function $1-z$ to characterize the cyclicity of holomorphic functions on the closed unit disk. 
%In \cite{beneteau2019boundary}, the authors compute Taylor coefficients of $1 - p_n^*f$ in weighted Hardy spaces (discussed below) when $f$ is a polynomial and prove results about the convergence of $( 1 - p_n^*f )$. 
%
%In \cite{beneteau2016orthogonal}, the authors dive into orthogonal polynomials and reproducing kernels to get lower bounds on the moduli of zeros of optimal approximants in Dirichlet-type spaces. 
%Then in \cite{beneteau2016zeros}, the authors study a larger class of reproducing kernel Hilbert spaces and give results on accumulation points and lower bounds on the moduli of zeros of optimal approximants. 
%
%Following these themes, we would like to develop some theory for different choices of $g$ (cyclic or not) in considering $\|pf - g \|_\h$ and then explore the relationship between optimal approximants and generalized \textit{inner} functions (first studied in \cite{beneteau2017remarks}). This will then yield some observations which allow us to explicitly compute $\Pi_{[f]}(1)$ when $f$ is a polynomial. Let us first outline some spaces where assumptions (1) and (2) from above hold. 

\subsection{Weighted Hardy Spaces}\label{whs}
A well-studied family of spaces satisfying these properties are some weighted Hardy spaces. Letting $w := \{w_k\}_{k \ge 0}$ be a sequence of positive real numbers with $\lim_{k \to \infty}w_{k+1}/w_k =  1$ and $w_0 =1$, define $H^2_w$ as the space of all functions $f(z)$ with Maclaurin series 
\[
f(z) =  \sum_{k=0}^{\infty} a_k z^k, \ \ \ |z| < 1
\]
for which 
\[
\| f \|^2_w := \sum_{k=0}^{\infty} w_k |a_k|^2 < \infty.
\]
We point out that $H^2_w$ is a Hilbert space; if $f$ and $g$ are elements of $H^2_w$ with Maclaurin coefficients $\{a_k\}_{k\ge0}$ and $\{b_k\}_{k\ge0}$ respectively, their inner product is given by 
\[
\langle f, g \rangle_w = \sum_{k = 0}^{\infty}w_k a_k \overline{b_k}.
\]
The limit condition on the sequence $w$ ensures that functions analytic in a disk larger than $\D$ belong to $H^2_w$, and that all functions in these spaces are analytic in $\D$. Taking $\alpha \in \mathbb{R}$ and $w = \{\left(k+1\right)^\alpha\}_{k \ge 0}$ gives the Dirichlet-type spaces $\mathcal{D}_\alpha$. When $\alpha = 0$ we recover the classical Hardy space $H^2$, $\alpha = -1$ gives the Bergman space $A^2$, and $\alpha = 1$ gives the Dirichlet space $\mathcal{D}$. Much of the existing literature on optimal polynomial approximants has focused on these spaces. However, in this paper, the results to be proved will extend to some other spaces that do not have some of the useful properties present in the $\mathcal{D}_\alpha$ spaces. Below we give two examples of such spaces. 

\subsection{Szeg\H{o}'s Theorem and $\frac1m H^2$}\label{oneoverm}
A classical theorem of Szeg\H{o} says that for $v \in L^1(\mathbb{T})$ positive, the closure of the analytic polynomials in $L^2(v)$ coincides with all of $L^2(v)$ if and only if $\int_{\mathbb{T}} \log{v} = -\infty$ (e.g., see \cite{conway1991theory}). In the case that $\int_{\mathbb{T}} \log{v} > -\infty$, there exists an outer (i.e.\editA{,} $H^2$-cyclic) function $m$ such that $v = |m|^2$. Further, $P^2(v) := \overline{\text{span}\{z^k : k \ge 0\}}^{\ L^2(v)}$ is isomorphic to $\frac1m H^2 : = \{ f/m : f \in H^2 \}$ \editB{(which we endow with the $H^2$ norm)}. It follows that multiplication by $1/m$ is an isometry and for all $f\in P^2(v)$, we have
$\| f \|_{P^2(v)} = \| f/m \|_{H^2}$.
A \editA{distinctive characteristic of} these spaces is that the monomials are not pairwise orthogonal. 

\subsection{de Branges-Rovnyak Spaces}
Denote by $H^\infty$ the set of bounded analytic functions on $\D$. If $b$ is a function in the unit ball of $H^\infty$ (i.e.\editA{,} $\sup_{z\in \D}|b(z)| \le 1$), then there exists a reproducing kernel Hilbert space on $\D$, denoted $\h(b)$ so that the reproducing kernel for this space is given by
\[
k_\lambda(z) = \frac{1 - \overline{b(\lambda)}b(z)}{1 - \overline{\lambda}z}.
\]
These spaces are called \textit{de Branges-Rovnyak spaces} (see \cite{timotin2015short} for an introduction). The structure of these spaces \editA{varies} with \editA{the} choice of $b$; we would like to keep in mind the spaces for which  the reproducing kernel at zero is not equal to 1 (i.e.\editA{,} when $b(0) \neq 0$). We will generalize some ideas from the existing body of work, for example in the Dirichlet-type spaces, where the function 1 is the reproducing kernel at zero. We will not dig into the study of de \editB{Branges}-Rovnyak spaces here, but the authors in \cite{fricain2014dbr} have characterized cyclicity when $b$ is non-extreme.

\section{General Optimal Approximants}\label{genopa}
We make the distinction of \textit{general} optimal polynomial approximant to generalize the case when $g=1$ in studying $\| pf - g \|_\h$. Any further use of $g$ will be in this context. We will now lay the framework for studying such approximants.

\begin{definition}[Optimal Polynomial Approximant]
\editB{Let $f,g \in \h$ and $n \in \mathbb{N}$.} Define the $n$th \textit{optimal polynomial approximant} to $g/f$ as 
\[
p_n^* := \argmin_{p \in \mathcal{P}_n} \| pf - g \|_\h.
\]
\end{definition}
\editA{Here, $\argmin$ is the argument of the minimum, i.e., 
\[
p_n^* = \{ p \in \mathcal{P}_n : \| pf - g \|_\h \le \| qf - g \|_\h \ \text{for all} \ q\in \mathcal{P}_n\}.
\]}Given the Hilbert space structure, the above minimization is immediate-- simply project $g$ onto the closed subspace $f \mathcal{P}_n$, i.e.\editA{,}
\[
p_n^* \editB{f} = \Pi_{f\mathcal{P}_n}(g).
\]
\editB{Hence, the solution to the minimization problem uniquely exists so long as $f$ is not identically zero, and is non-zero so long as $g$ is not orthogonal to $f\mathcal{P}_n$. In turn, we will be mostly concerned with the cases where $f \not\equiv 0$ and $g$ is not orthogonal to $f\mathcal{P}_n$ for some $n\ge0$. We note that when $g$ is chosen to be the reproducing kernel at the origin, we have that $k_0$ is orthogonal to $f\mathcal{P}_n$ (for any $n\ge 0$, and in the limit) if and only if $f$ and $k_0$ are orthogonal, i.e., $f(0) = 0$.} \editA{Intuitively}, if $\lim_{n \to \infty} p_n^*$ looks like $g/f$, then the above norm goes to zero and does so \textit{optimally}. In this sense, we are trying to approximate $g/f$ with polynomials. 

In \cite{fricain2014cyclicity} (Theorem 2.1)\editA{,} an algorithm for finding optimal polynomial \editA{approximants} is given for $g = 1$ in spaces where $k_0$, the reproducing kernel at zero, is equal to 1. We generalize the ideas from this algorithm below.

\begin{definition}[Optimal System]
For $f, g \in \h$, define the $n$th \textit{optimal matrix} of $f$ in $\h$ as 
\[
G_n := \left( \langle z^if, z^jf \rangle_\h \right)_{0 \leq i,j \leq n}
\]
and the $n$th \textit{optimal system}  of $g/f$ as 
\[
G_n  \editA{\vec{x}} = \left( \langle g, f \rangle, \langle g, zf \rangle, \ldots, \langle g, z^nf \rangle\right)^T.
\]
\end{definition}

The following proposition will shed light on these definitions.

\begin{prop}
\editA{
Let $f,g \in \h$. The vector $\vec{a}_n = (a_0, a_1,  \ldots, a_n)^T$ solving the optimal system
\[
G_n \editA{\vec{x}}=  \left( \langle g, f \rangle, \langle g, zf \rangle, \ldots, \langle g, z^nf \rangle\right)^T
\]
gives the coefficients of the $n$th optimal approximant to $g/f$.
That is, the $n$th optimal approximant to $g/f$ is $p_n^*(z) = a_0 + a_1z + \cdots + a_n z^n$.
}
\end{prop}
\begin{proof}
The optimality of $p_n^*$ means for all $q \in \mathcal{P}_n$
\[
\| p_n^*f - g \|_\h^2 \leq \| qf - g \|_\h^2.
\]
This occurs if and only if $p_n^*f - g \perp qf$. Equivalently, for $j = 0, \ldots, n$, we must have
\[
\langle p_n^*f - g, z^j f \rangle_\h = 0.
\]
Moving $\langle g, z^j f \rangle_\h$ to the right hand side of the above equation and putting $p_n^*(z) = \sum_{j=0}^n a_j z^j$ gives the proposed system. 
\end{proof}

Our next proposition is well-known and will be important for our work; for posterity, we provide a proof. 
\begin{prop}
For $f \in \h$, the orthogonal projections $\Pi_n : \h \to f\mathcal{P}_n$ converge to the orthogonal projection $\Pi_{[f]} : \h \to [f]$ in the strong operator topology. 
\editB{Further, if $f, g\in \h$ with $f\not \equiv 0$, and $(p_n^*)_{n \ge 0}$ the optimal approximants to $g/f$}, then $\varphi := \Pi_{[f]}(g)$ is the unique function such that
\[
\| p_n^*f - \varphi \|_\h \to 0. %\xrightarrow{n \to \infty} 0.
\]
\end{prop}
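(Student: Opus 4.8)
The plan is to prove the statement in two parts, corresponding to the two assertions. The first claim is that $\Pi_n \to \Pi_{[f]}$ in the strong operator topology, and the second identifies $\varphi = \Pi_{[f]}(g)$ as the limit of the sequence $p_n^* f$.

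\textbf{Part 1: Strong convergence of the projections.} Fix an arbitrary $h \in \h$; I want to show $\Pi_n h \to \Pi_{[f]} h$ in norm. The key structural fact is that the subspaces $f\mathcal{P}_n$ form an increasing nested chain, $f\mathcal{P}_0 \subseteq f\mathcal{P}_1 \subseteq \cdots$, whose union $\bigcup_n f\mathcal{P}_n = f\mathcal{P}$ is dense in $[f]$, since $[f]$ is by definition the closure of $\mathrm{span}\{z^n f : n \ge 0\} = f\mathcal{P}$. This is exactly the setting of the standard functional-analysis fact that the orthogonal projections onto an increasing sequence of closed subspaces converge strongly to the projection onto the closure of their union. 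I would either cite this as a known result or give the short argument: for each $n$, $\Pi_n h = \Pi_n \Pi_{[f]} h$ (because $f\mathcal{P}_n \subseteq [f]$, so projecting onto the larger space first changes nothing), which reduces matters to $h \in [f]$. For such $h$, density gives some $q f \in f\mathcal{P}$ with $\|h - qf\|_\h < \varepsilon$; once $n$ is large enough that $qf \in f\mathcal{P}_n$, the best-approximation property of the projection yields $\|h - \Pi_n h\|_\h \le \|h - qf\|_\h < \varepsilon$, and we are done.

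\textbf{Part 2: Identifying the limit.} By the preceding Proposition, $p_n^* = \Pi_{f\mathcal{P}_n}(g)$, so $p_n^* f = \Pi_n g$ (here I am using that projecting $g$ onto $f\mathcal{P}_n$ lands in that subspace, and $p_n^* f$ is precisely that element). Applying Part 1 with $h = g$ gives $\Pi_n g \to \Pi_{[f]} g = \varphi$, which is exactly the assertion $\|p_n^* f - \varphi\|_\h \to 0$. For uniqueness, suppose $\psi$ also satisfies $\|p_n^* f - \psi\|_\h \to 0$; then $\|\varphi - \psi\|_\h \le \|\varphi - p_n^* f\|_\h + \|p_n^* f - \psi\|_\h \to 0$, forcing $\psi = \varphi$. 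One should also note that $\varphi \in [f]$, which is automatic since each $p_n^* f \in f\mathcal{P}_n \subseteq [f]$ and $[f]$ is closed.

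I expect the main obstacle to be stating Part 1 cleanly rather than any genuine difficulty: the only subtlety is verifying that $\bigcup_n f\mathcal{P}_n$ is dense in $[f]$ (immediate from the definition of $[f]$) and invoking the nested-projection convergence with the correct reduction $\Pi_n = \Pi_n \Pi_{[f]}$. The hypothesis $\langle f, g\rangle \neq 0$ is needed only to ensure $p_n^*$ is well-defined and nonzero via the earlier propositions; it plays no role in the convergence itself, since strong convergence of $\Pi_n$ holds for \emph{every} input vector regardless. No smoothness or reproducing-kernel structure beyond assumptions (1) and (2) is required here.
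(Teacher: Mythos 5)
Your proof is correct and follows essentially the same route as the paper's: both reduce to the case $h \in [f]$ via $\Pi_n h = \Pi_n \Pi_{[f]} h$ (the paper phrases this as decomposing $u = \Pi_{[f]}(u) + v$ with $\Pi_n v = 0$) and then invoke density of $\bigcup_n f\mathcal{P}_n$ in $[f]$ together with the best-approximation property of orthogonal projections. Your explicit treatment of uniqueness and of $\varphi \in [f]$ is a small addition the paper leaves implicit.
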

\begin{proof}
Let $u \in \h$ and put $u = \Pi_{[f]}(u) + v$. Then $v$ is orthogonal to $[f]$, and hence orthogonal to $f\mathcal{P}_n$, so $\Pi_n (v) = 0$ for all $n \ge 0$. Since $\cup_n f\mathcal{P}_n$ is dense in $[f]$, given $\epsilon > 0$, there exists $N$ such that $\text{dist}(\Pi_{[f]}(u), f\mathcal{P}_N) < \epsilon$. Then, for all $ n \ge N$, we have 
\begin{align*}
\| \Pi_{[f]}(u) - \Pi_n(u) \|_\h
&= \text{dist}(\Pi_{[f]}(u), f\mathcal{P}_n) \\
& \le \text{dist}(\Pi_{[f]}(u), f\mathcal{P}_N) \\
&< \epsilon.
\end{align*}
Since $u$ was arbitrary, we have that $\Pi_n \to \Pi_{[f]}$ strongly. 

Further, take $u=g$ to get $\| \Pi_n(g) - \Pi_{[f]}(g) \|_\h = \| p_n^*f - \varphi \|_\h \to 0$. 
\end{proof}

\editA{Again, note that if $g$ is cyclic, then $f$ is cyclic if and only if $p_n^*f \to g$, where $(p_n^*)_{n\ge0}$ are the optimal approximants to $g/f$}. We will now make some observations and motivate a few questions surrounding the behavior of optimal approximants.

\section{Truncations of Power Series and Stabilization of Optimal Approximants}\label{stable}

Let $h$ be analytic on some domain containing the origin. We will denote the $n$th Taylor polynomial of $h$ as
\[
T_n\left(h\right) := \sum_{k=0}^{n} \frac{ h^{\left(k\right)}\left(0\right)}{k!} z^k.
\]
For $f\in \h$, a first natural guess might be that the optimal approximants to $g/f$ are $T_n(g/f)$. \editA{However, it turns out that Taylor polynomials are a poor guess}. For example, in the Dirichlet space $\mathcal{D}$, the cyclic function $1-z$ was studied in \cite{beneteau2015cyclicity}, and there it was pointed out that
\begin{align*}
\| T_n(1/f)f - 1 \|_{\mathcal{D}} &= \| \left(1 + z + \ldots + z^n\right)(1-z) - 1 \|_{\mathcal{D}}\\
&= \| z^{n+1} \|_{\mathcal{D}}\\
&= n+1,
\end{align*}
which is unbounded as $n \to \infty$. In this case, $T_n(1/f)$ is neither optimal nor provides a sequence that proves $f$ to be cyclic (even though $T_n(1/f)f \to 1$ pointwise in $\D$). 
Instead of using Taylor polynomials, we ask a couple of more general questions:
\begin{enumerate}
\item[(Q1)] Given \editA{$g \in \h$ and} a power series $\varphi(z) = \sum_{k=0}^{\infty} a_k z^k$, can we characterize $f \in \h$ such that the $n$th optimal polynomial approximants to $g/f$ are given by $T_n(\varphi)$ for all $n$ greater than some $M>0$?
\item[(Q2)] \editA{Given $g\in \h$ and supposing $p$ is a polynomial, can we characterize $f$ such that $\Pi_{[f]} (g) = p f$? }
\end{enumerate}

We will proceed by first answering these questions when $g = \hat{k}_0$.

\subsection{The Reproducing Kernel at Zero and Inner Functions}
As mentioned previously, much of the existing literature on optimal approximants has been centered around approximating $1/f$ in spaces where 1 is the reproducing kernel at zero. In the \editB{present} section, we will make a few observations and generalize these results. 

\begin{definition}[\editA{$\h$-inner function}]
Say that $f \in \h$ is \textit{$\h$-inner} if
\[
\langle f, z^jf  \rangle_\h = \delta_{j0}.
\]
\end{definition}
This definition was first given by Aleman, Richter, and Sundberg in \cite{aleman1996beurling} for $\h = A^2$, and coincides with the classical definition of inner in $H^2$\editA{; a function is $H^2$-inner if $|f| = 1$ always everywhere on the unit circle}. Classical inner functions play a crucial role in understanding operator and function theoretic properties of $H^2$. Let us gather some facts about the relationship between \editA{$\h$-inner} functions and optimal polynomial approximants. Again, we point to \cite{beneteau2017remarks} for further discussion on this topic, where it was first studied. 

\begin{prop}\label{cyclic-inner}
\editB{If there is a function in} $\h$ that is both cyclic and \editA{$\h$-inner}, \editB{then, up to a unimodular constant, this function is unique, and is the normalized reproducing kernel at zero}.
\end{prop}
\begin{proof}
Let $\theta \in \h$ be cyclic and \editA{$\h$-inner}. Then for all $h \in \h$, there exist polynomials $p_n$ such that $p_n \theta \to h$ and as $\theta$ is \editA{$\h$-inner}, $\langle  p_n \theta, \theta \rangle_\h = p_n(0)$. Taking limits, and noting $\theta(0) \neq 0$ by cyclicity, we have $\langle h, \theta \rangle_\h = h(0)/\theta(0)$. 
This implies that $\overline{\theta(0)}\theta$ is the reproducing kernel at zero. Thus, by the Riesz representation theorem, this function is well-defined for any choice of $\theta$ and must be $k_0$. Normalizing \editB{$\overline{\theta(0)}\theta$} then concludes the proof. 
\end{proof}

\editB{In general, the kernel at the origin is always $\h$-inner, but it is not known if it must also be cyclic (hence, the existence hypothesis in the above proposition).}
Note that in the Dirichlet-type spaces, the functions $\theta$ above are just unimodular constants, \editB{and $k_0=1$ is clearly cyclic}. However, as noted previously, in DeBrange-Rovnyak spaces $\h(b)$, unless $b(0) = 0$, the reproducing kernel at zero is non-constant and is given by $\overline{\theta(0)}\theta= 1 - \overline{b(0)}b$. \editB{Even in this case, it is not known if the kernel at zero must always be cyclic.}

\editB{We mention again that the optimal approximants to $\hat{k}_0/f$ are non-zero if and only if $f(0) \neq 0$.}

\begin{lemma}\label{inner}
Let $f \in \h$ with $f(0) \neq 0$. Let $\varphi$ be the orthogonal projection of $k_0$ onto $[f]$. Then $\varphi/\sqrt{\varphi(0)}$ is $\h$-inner. 
\end{lemma}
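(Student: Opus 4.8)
The plan is to unwind the definition of $\h$-inner and collapse everything to one reproducing-kernel computation. Writing $\psi := \varphi/\sqrt{\varphi(0)}$, the desired identity $\langle \psi, z^j\psi\rangle_\h = \delta_{j0}$ is equivalent (once we know, as checked below, that $\varphi(0)$ is a positive real so that the scalar $\varphi(0)$ may be pulled outside the inner product) to showing that $\langle \varphi, z^j\varphi\rangle_\h$ is a multiple of $\delta_{j0}$ taking the value $\varphi(0)$ at $j=0$. So the lemma reduces entirely to evaluating $\langle \varphi, z^j\varphi\rangle_\h$ for every $j \ge 0$.

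The computation rests on two observations. First, since $[f]$ is shift invariant and $\varphi \in [f]$, each $z^j\varphi = S^j\varphi$ again lies in $[f]$ (this is where assumption (2), boundedness of $S$, is used). Second, because $\varphi = \Pi_{[f]}(\hat k_0)$, the residual $\hat k_0 - \varphi$ is orthogonal to all of $[f]$, and in particular to each $z^j\varphi$. Combining these and then applying the reproducing property to the normalized kernel at the origin,
\[
\langle \varphi, z^j\varphi\rangle_\h = \langle \hat k_0, z^j\varphi\rangle_\h = \frac{1}{\|k_0\|_\h}\,\overline{(z^j\varphi)(0)}.
\]
Since $z^j\varphi$ vanishes at $0$ for $j \ge 1$ and equals $\varphi$ for $j=0$, the right-hand side is $0$ for every $j \ge 1$ and a nonnegative real for $j=0$. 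This immediately yields the off-diagonal inner conditions, and it shows that the diagonal value $\langle \varphi, \varphi\rangle_\h = \|\varphi\|_\h^2$ is a nonnegative real, as it must be.

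It remains to treat $j = 0$, which carries the only genuine subtlety, namely nondegeneracy and the precise normalization. First I would rule out $\varphi = 0$: because $f(0) \neq 0$, we have $\langle f, \hat k_0\rangle_\h = f(0)/\|k_0\|_\h \neq 0$, so $\hat k_0$ is not orthogonal to $f$, hence not orthogonal to $[f]$, and therefore its projection $\varphi$ is nonzero. The $j=0$ case of the displayed identity then reads $\|\varphi\|_\h^2 = \overline{\varphi(0)}/\|k_0\|_\h$, whose positivity forces $\varphi(0)$ to be a positive real, so $\sqrt{\varphi(0)}$ is legitimate and $\psi$ is well defined. The hard part — really the only place where care is needed — is verifying that dividing by $\sqrt{\varphi(0)}$ (as opposed to by $\|\varphi\|_\h$) deposits exactly $1$ in the $j=0$ slot; this is the step where the normalization of $k_0$ must be tracked carefully. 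Once that constant is pinned down, the two cases combine to give $\langle \psi, z^j\psi\rangle_\h = \delta_{j0}$, i.e. $\psi = \varphi/\sqrt{\varphi(0)}$ is $\h$-inner.
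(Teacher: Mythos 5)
Your argument for $j\ge 1$ is exactly the paper's: $z^j\varphi\in[f]$ by shift invariance, $\hat k_0-\varphi\perp[f]$, and $(z^j\varphi)(0)=0$ together give $\langle\varphi,z^j\varphi\rangle_\h=0$; the nonvanishing of $\varphi$ and the positivity of $\varphi(0)$ are also handled correctly. The genuine gap is that you stop at precisely the point you yourself single out as ``the only place where care is needed'': you never verify that dividing by $\sqrt{\varphi(0)}$ puts a $1$ in the $j=0$ slot, you only promise that the constant can be ``pinned down.'' In fact your own identity shows that it cannot be pinned down to $1$ without a further hypothesis: from $\|\varphi\|_\h^2=\overline{\varphi(0)}/\|k_0\|_\h$ you get
\[
\left\|\frac{\varphi}{\sqrt{\varphi(0)}}\right\|_\h^2=\frac{1}{\|k_0\|_\h},
\]
which equals $1$ exactly when $\|k_0\|_\h=1$, i.e. $k_0(0)=1$. (Test case: $f$ cyclic, so $\varphi=\hat k_0$ and $\varphi(0)=\|k_0\|_\h$.) So the step you deferred is not a routine bookkeeping check; it is where the statement's normalization actually lives.

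For comparison, the paper closes this step by writing $\langle\varphi,\varphi\rangle_\h=\langle\hat k_0,\varphi\rangle_\h=\varphi(0)$, i.e. it treats $\hat k_0$ as reproducing the value at the origin, which is valid precisely under the convention $k_0(0)=1$ (as in the $H^2_w$ spaces with $w_0=1$, where $\hat k_0=1$), but not, for instance, in $\h(b)$ with $b(0)\neq 0$. To finish your proof you must either invoke that normalization explicitly, or replace $\hat k_0$ by the unnormalized kernel $k_0$ in the definition of $\varphi$, in which case $\|\varphi\|_\h^2=\langle k_0,\varphi\rangle_\h=\overline{\varphi(0)}=\varphi(0)$ holds on the nose and the two cases do combine to give $\langle\psi,z^j\psi\rangle_\h=\delta_{j0}$. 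As written, the proof is incomplete at its decisive step.
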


\begin{proof}
Notice that $k_0 - \varphi \perp [f]$ and $[f]$ is shift invariant, so for all $j \ge1$ we have 
\[
0 = \langle z^j \varphi, k_0 - \varphi \rangle_\h = - \langle z^j \varphi, \varphi \rangle_\h.
\]
Further, $\langle \varphi, \varphi \rangle_\h = \langle k_0, \varphi \rangle_\h = \varphi(0)$ which gives $\|\varphi\|_\h = \sqrt{\varphi(0)}$. Thus
\[
\left\langle \frac{\varphi}{\sqrt{\varphi(0)}}, z^j \frac{\varphi}{\sqrt{\varphi(0)}} \right\rangle_\h = \delta_{j0}
\]
so $\varphi/\sqrt{\varphi(0)}$ is \editA{$\h$-inner}. 
\end{proof}

\begin{lemma}\label{finite}
Let $f \in \h$ with $f(0) \neq 0$ and let $(p_n^*)$ be the optimal approximants to $\hat{k}_0/f$. Let $\varphi(z) = \sum_{k=0}^{\infty}a_kz^k$ and suppose that $p_n^* = T_n(\varphi)$ for all $n \ge M$. Then $p_n^* = p_M^*$ for all $n \ge M$. That is, $\varphi = p_M^*$.
\end{lemma}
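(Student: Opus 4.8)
The plan is to show that the power series $\varphi$ has no terms beyond degree $M$, i.e. that $a_{n+1}=0$ for every $n\ge M$; this forces $\varphi=\sum_{k=0}^M a_k z^k = T_M(\varphi)=p_M^*$ and hence $p_n^*=p_M^*$ for all $n\ge M$. I would argue by contradiction: fix some $n\ge M$ and assume $a_{n+1}\neq 0$. Since both $n$ and $n+1$ exceed $M$, the hypothesis gives $p_n^*=\sum_{k=0}^n a_k z^k$ and $p_{n+1}^*=\sum_{k=0}^{n+1}a_k z^k$, so that $p_{n+1}^*=p_n^*+a_{n+1}z^{n+1}$. The whole argument then runs through the normal (optimality) equations established earlier, namely $\langle p_m^*f-\hat{k}_0,\, z^k f\rangle_\h=0$ for $0\le k\le m$, applied at both $m=n$ and $m=n+1$.

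The first step is to subtract the $n$th normal equation from the $(n+1)$th for each $k$ with $0\le k\le n$. The $\hat{k}_0$ contribution and the $p_n^*f$ contribution cancel, leaving $a_{n+1}\langle z^{n+1}f,\, z^k f\rangle_\h=0$ for $0\le k\le n$. Under the standing assumption $a_{n+1}\neq 0$ this says $z^{n+1}f\perp f\mathcal{P}_n$; in particular $\langle p_n^*f,\, z^{n+1}f\rangle_\h=0$, since $p_n^*f\in f\mathcal{P}_n$. The second step is to use the one remaining equation, the top one with $k=n+1$. Here the key simplification is that $(z^{n+1}f)(0)=0$ because $n+1\ge 1$, so by the reproducing property $\langle \hat{k}_0,\, z^{n+1}f\rangle_\h=\overline{(z^{n+1}f)(0)}/\|k_0\|_\h=0$. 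Writing $p_{n+1}^*f=p_n^*f+a_{n+1}z^{n+1}f$ and inserting the orthogonality from the first step, the $k=n+1$ equation collapses to $a_{n+1}\|z^{n+1}f\|_\h^2=0$. Since $f(0)\neq 0$ forces $f\not\equiv 0$, and $\h$ is a space of analytic functions on which multiplication by $z^{n+1}$ is injective, $z^{n+1}f$ is a nonzero element of $\h$ and $\|z^{n+1}f\|_\h>0$; this contradicts $a_{n+1}\neq 0$.

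The point I expect to be the main obstacle is the temptation to conclude $a_{n+1}=0$ directly from the $k\le n$ equations by producing a single index $k$ with $\langle z^{n+1}f,\, z^k f\rangle_\h\neq 0$. In general this is not available: in spaces where the monomials are not pairwise orthogonal (such as $\frac1m H^2$ from Section \ref{oneoverm}) one cannot read off a nonzero inner product of this form, and the shift is only bounded rather than isometric, so no reduction to $\langle z^{n+1-k}f,\, f\rangle_\h$ is legitimate. The device that circumvents this is to feed the derived orthogonality $z^{n+1}f\perp f\mathcal{P}_n$ back into the top normal equation, where the strict positivity $\|z^{n+1}f\|_\h^2>0$ supplies the contradiction; the only care needed is the elementary verification that $S^{n+1}f\neq 0$.
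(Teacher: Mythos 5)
Your proof is correct, but it takes a genuinely different route from the one in the paper. You argue degree by degree through the normal equations: subtracting the optimality conditions for $p_n^*$ from those for $p_{n+1}^*$ gives $a_{n+1}\langle z^{n+1}f, z^kf\rangle_\h=0$ for $0\le k\le n$, and feeding the resulting orthogonality $z^{n+1}f\perp f\mathcal{P}_n$ back into the $k=n+1$ condition, where $\langle \hat{k}_0, z^{n+1}f\rangle_\h=0$, collapses everything to $a_{n+1}\|z^{n+1}f\|_\h^2=0$; your cautions about non-orthogonal monomials and about verifying $S^{n+1}f\neq 0$ are both warranted and correctly handled. The paper instead makes a single norm computation: using $\|p_n^*f\|_\h^2=\langle p_n^*f,\hat{k}_0\rangle_\h$ and, for $n\ge M$, $\langle p_n^*f, p_M^*f\rangle_\h=\langle \hat{k}_0, p_M^*f\rangle_\h$ (nested projections), the expansion of $\|p_n^*f-p_M^*f\|_\h^2$ reduces to a multiple of $(p_n^*f)(0)-(p_M^*f)(0)$, which vanishes because the hypothesis forces all the $p_n^*$ to share the constant coefficient $a_0$. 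The trade-off: the paper's computation reveals that only stabilization of the constant term (equivalently, of $\text{dist}(\hat{k}_0, f\mathcal{P}_n)$, per the remark following the lemma) is actually needed, whereas your argument isolates exactly where $g=\hat{k}_0$ enters --- solely through $\langle \hat{k}_0, z^{n+1}f\rangle_\h=0$ --- and therefore transfers verbatim to any $g$ orthogonal to $[Sf]$, anticipating the general stabilization theorem of Section 4.
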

\begin{proof}
By hypothesis, for all $n \ge M$, $\varphi(0) = (p_n^* f)(0) = (p_M^*f)(0)$. Now notice, for all $n \ge M$,
\begin{align*}
\| p_n^* f - p_M^*f \|_\h^2 &= \| p_n^* f \|_\h^2 - 2\text{Re}\{\langle p_n^*f, p_M^*f \rangle_\h \} + \| p_m^* f \|_\h^2\\
&= (p_n^* f)(0) - 2(p_M^*f)(0) + (p_M^*f)(0)\\
&=0
\end{align*}
Hence, $p_n^*f = p_M^*f$ for all $n \ge M$, and as $f$ is not identically zero, $p_n^* = p_M^*$ for all $n \ge M$.\\
\end{proof}

\begin{remark}
It should be pointed out that Lemma \ref{finite} says that there are no functions $f$ for which the optimal approximants to $\hat{k}_0/f$ come from truncations of a single power series with finitely many zero coefficients. This lemma can also be seen as a consequence of the simple exercise showing that $\operatorname{dist}^2(\hat{k}_0, f\mathcal{P}_n) = 1 - (p_n^*f)(0)$.
This also tells us that for $g = \hat{k}_0$, (Q1) and (Q2) are equivalent. The following definition is now natural.
\end{remark}

\begin{definition}[Stabilizing approximants]
Let $f,g \in \h$ with \editB{$g$ not orthogonal to $[f]$} and let $(p_n^*)_{n \ge 0}$ be the optimal approximants to $g/f$. Say that the optimal approximants \textit{stabilize} at $p_M^*$ if $M$ is the smallest non-negative integer such that $p_n^* = p_M^*$ for all $n \ge M$. 
\end{definition}

\begin{lemma}\label{constant}
Let $f \in \h$ with $f(0) \neq 0$ and let $(p_n^*)_{n \ge 0}$ be the optimal approximants to $\hat{k}_0/f$. Then $f$ is \editA{$\h$-inner} \editB{(up to a constant multiple)} if and only if, for all $n \ge 0$,
\[
\editB{p_n^* = \frac{\overline{f(0)}}{\|k_0\|\|f\|^2}}.
\]
\end{lemma}
\begin{proof}
For the forward direction, suppose $f$ is \editB{a constant multiple of an} \editA{$\h$-inner function}. For any $n \ge 0$, consider the optimal system for $\hat{k}_0/f$:
\[
G_n \vec{x} = \left( \langle \hat{k}_0, f \rangle, 0, \ldots, 0\right)^T =  \left( \|k_0\|^{-1}\overline{f(0)}, 0, \ldots, 0\right)^T.
\]
As $\langle f, z^kf \rangle = 0$ for all $k \ge 1$, the entries in the first row and column of $G_n$, except the (0,0) entry, are all zero. It follows that the inverse of $G_n$ must also satisfy this property. Now, considering $G_n^{-1}\left( \|k_0\|^{-1}\overline{f(0)}, 0, \ldots, 0\right)^T$ to recover the coefficients of $p_n^*$, we see that $p_n^*$ is the constant $\frac{\overline{f(0)}}{\|k_0\|\|f\|^2}$ for any $n\ge0$. 

Now suppose $p_n^*(z) = \frac{\overline{f(0)}}{\|k_0\|\|f\|^2}$ for all $n \ge 0$. Considering the optimal system 
\[
G_1 \left(\frac{\overline{f(0)}}{\|k_0\|\|f\|^2}, 0\right)^T = \left(\frac{\overline{f(0)}}{\|k_0\|}, 0\right)^T
\]
quickly yields that $\langle f, zf \rangle_\h = 0$. As the coefficients of $p_n^*$ are stable, a simple induction argument then shows that  $\langle f, z^k f \rangle_\h = 0$ for all $k \ge 1$. Thus, $f$ is a constant multiple of an $\h$-inner function.
\end{proof}

The forward implication of this lemma was given in \cite{beneteau2017remarks} for spaces where $\hat{k}_0 = 1$. 
We now give a characterization of stabilizing approximants, which answers (Q2) when $g = \hat{k}_0$. 

We are now ready to prove the main theorem of this section, which gives a characterization of functions with stabilizing approximants. 
\begin{theorem}\label{main}
Let $f \in \h$ with $f(0) \neq 0$ and let $(p_n^*)$ be the optimal polynomial approximants to $\hat{k}_0/f$. The following are equivalent, and the smallest $M$ for which each of the statements hold is the same: 
\begin{enumerate}
\item There exists a function $\varphi(z) = \sum_{k \ge 0} a_k z^k$ such that $p_n^* = T_n(\varphi)$ for all $n \ge M$.
\item The optimal approximants to $\hat{k}_0/f$ stabilize at $p_M^*$.
\item $p_M^*f$ is the orthogonal projection of $\hat{k}_0$ onto $[f]$.
\item $f = c u/p_M^*$, where $c = \sqrt{(p_M^*f)(0)}$ and $u$ is $\h$-inner.
\end{enumerate}
\end{theorem}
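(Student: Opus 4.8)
The plan is to establish the chain $(1)\Leftrightarrow(2)\Leftrightarrow(3)\Leftrightarrow(4)$, verifying at each step that the threshold $M$ is preserved, and to isolate the one genuinely nontrivial implication, $(4)\Rightarrow(3)$. The equivalence $(1)\Leftrightarrow(2)$ is almost immediate from Lemma \ref{finite}: if $p_n^*=T_n(\varphi)$ for all $n\ge M$, that lemma forces $\varphi=p_M^*$ and $p_n^*=p_M^*$ for $n\ge M$, which is exactly stabilization at $p_M^*$; conversely, if the approximants stabilize at $p_M^*$ one takes $\varphi=p_M^*$, since $T_n(p_M^*)=p_M^*$ once $n\ge M$. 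A short bookkeeping argument then shows that the smallest admissible $M$ in $(1)$ coincides with the stabilization index in $(2)$.

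For $(2)\Leftrightarrow(3)$ I would lean on the Proposition of Section \ref{genopa} asserting $\Pi_n\to\Pi_{[f]}$ strongly, which gives $p_n^*f=\Pi_n(\hat k_0)\to\Pi_{[f]}(\hat k_0)$ in norm. If the approximants stabilize at $p_M^*$, then $p_n^*f=p_M^*f$ for all $n\ge M$, so the limit $\Pi_{[f]}(\hat k_0)$ equals $p_M^*f$, which is $(3)$. Conversely, if $p_M^*f=\Pi_{[f]}(\hat k_0)$, then the minimizer of $\mathrm{dist}(\hat k_0,\,\cdot\,)$ over the whole space $[f]$ already lies in $f\mathcal P_M$; since $f\mathcal P_M\subseteq f\mathcal P_n\subseteq[f]$ for every $n\ge M$, that same element must be the minimizer over each intermediate $f\mathcal P_n$, forcing $\Pi_n(\hat k_0)=p_M^*f$ and hence $p_n^*=p_M^*$ for all $n\ge M$. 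The thresholds agree because $p_M^*f$ meets the limit exactly when the projections stop changing.

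The implication $(3)\Rightarrow(4)$ is a direct application of Lemma \ref{inner}: writing $\varphi=\Pi_{[f]}(\hat k_0)=p_M^*f$, the lemma says $u:=\varphi/\sqrt{\varphi(0)}$ is $\h$-inner, and since $\varphi(0)=(p_M^*f)(0)=c^2$ we get $u=p_M^*f/c$, i.e.\ $f=cu/p_M^*$. The reverse implication $(4)\Rightarrow(3)$ is where the real work lies. Assuming $p_M^*f=cu$ with $u$ inner, I would verify $cu=\Pi_{[f]}(\hat k_0)$ by checking that $cu\in[f]$ (clear, since $cu\in f\mathcal P_M$) together with $\hat k_0-cu\perp z^kf$ for every $k\ge0$. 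For $0\le k\le M$ these orthogonality relations come for free, since $p_M^*$ is by definition the optimal approximant at level $M$. For $k>M$, using $(z^kf)(0)=0$, the requirement collapses to the single family of identities $\langle u,z^kf\rangle=0$.

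The main obstacle is precisely these identities for $k>M$, and my approach is to deduce them from the inner structure of $u$ once one knows $f\in[u]$. Indeed, if $f\in[u]$, then decomposing $f=\alpha u+w$ with $w$ in the shift-invariant subspace $\overline{\text{span}}\{z^ju:j\ge1\}$, which is orthogonal to $u$ by innerness, the relations $\langle u,z^ku\rangle=\delta_{k0}$ and $z^kw\perp u$ give $\langle u,z^kf\rangle=0$ for all $k\ge1$, completing $(4)\Rightarrow(3)$. Thus everything reduces to the equality $[f]=[u]$. One containment, $[u]\subseteq[f]$, is automatic from $u=p_M^*f/c\in f\mathcal P_M$; the reverse, $f\in[u]$, amounts to showing that the polynomial factor $p_M^*$ can be divided out without shrinking the cyclic subspace. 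I expect this to be the crux: it should follow from the location of the zeros of the optimal approximant $p_M^*$ (which the results of \cite{beneteau2016orthogonal,beneteau2016zeros} place off the closed disk), making $1/p_M^*$ approximable in a manner that keeps $p_M^*$ cyclic and yields $[p_M^*f]=[f]$; controlling the relevant convergence in the $\h$-norm, rather than merely pointwise, is the delicate point. Finally, the threshold in $(4)$ matches the others because the $u$ and $c$ produced in $(3)\Rightarrow(4)$ are manufactured from $\Pi_{[f]}(\hat k_0)$ and $p_M^*f$ at the same index $M$.
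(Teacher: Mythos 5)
Your handling of $(1)\Leftrightarrow(2)$, $(2)\Leftrightarrow(3)$, and $(3)\Rightarrow(4)$ is correct and matches the paper (Lemma \ref{finite}, the strong convergence $\Pi_n\to\Pi_{[f]}$ together with the nesting $f\mathcal P_M\subseteq f\mathcal P_n\subseteq[f]$, and Lemma \ref{inner}, respectively). The problem is $(4)\Rightarrow(3)$, which you correctly identify as the real content but do not actually prove. Your reduction to the single claim $f\in[u]$, i.e.\ $[p_M^*f]=[f]$, is left as an expectation, and the proposed justification does not go through in the setting of the theorem. The zero-location results of \cite{beneteau2016orthogonal,beneteau2016zeros} are specific to the Dirichlet-type scale, and for $\alpha<0$ they do not even exclude zeros of $p_M^*$ inside $\D$; the theorem, however, is stated for an arbitrary $\h$ satisfying only the standing hypotheses. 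Worse, in spaces such as $A^2$ an $\h$-inner function can vanish inside $\D$, so under hypothesis $(4)$ alone $p_M^*$ could a priori have a zero $\lambda\in\D$ with $f(\lambda)\neq0$, in which case every element of $[u]$ vanishes at $\lambda$ and $f\notin[u]$ --- your reduction would then be attacking a false intermediate statement. Even granting that $p_M^*$ is zero-free on $\overline\D$, the identity $[qf]=[f]$ for such a polynomial $q$ requires a multiplier/approximation argument (convergence of $T_n(1/q)\,qf\to f$ in the $\h$-norm) that is not available from ``polynomials are dense and $S$ is bounded'' alone. So the crux is genuinely open in your write-up.

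The paper closes this implication by a much more elementary bootstrap that never divides by $p_M^*$. Writing $p_M^*(z)=\sum_{k=0}^M a_kz^k$ with $M$ minimal, the innerness of $p_M^*f/c$ gives $0=\langle zp_M^*f,p_M^*f\rangle_\h=\sum_{k=0}^M a_k\langle z^{k+1}f,p_M^*f\rangle_\h$; the normal equations for $p_M^*$ (together with $\langle z^jf,\hat k_0\rangle_\h=0$ for $j\ge1$) kill every term except $a_M\langle z^{M+1}f,p_M^*f\rangle_\h$, and minimality of $M$ forces $a_M\neq0$, hence $\langle z^{M+1}f,p_M^*f\rangle_\h=0$. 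Iterating with $\langle z^jp_M^*f,p_M^*f\rangle_\h=0$ for $j=2,3,\dots$ yields $\langle z^{M+k}f,p_M^*f\rangle_\h=0$ for all $k\ge1$, so $\hat k_0-p_M^*f\perp z^kf$ for every $k\ge0$, which is exactly $(3)$. If you want to salvage your outline, replace the appeal to $f\in[u]$ with this induction; note that it is precisely here that the minimality of $M$ (i.e.\ $a_M\neq0$) is used, a hypothesis your argument never invokes.
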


\begin{proof}
The equivalence of (1) and (2) is given by Lemma \ref{finite} and taking $p_M^* = \varphi$ for the backward implication. The equivalence of (2) and (3) follows by definition. The fact that (3) implies (4) is given by Lemma \ref{inner}. The unique minimality of $M$ until now follows by definition and trivial arguments.

Now let us assume (4), putting $p_M^*(z) = \sum_{k=0}^M a_k z^k$ and assuming that $M$ is minimal. Then, 
\begin{align*}
0 &=  \left\langle z \ \frac{p_M^*f}{\sqrt{(p_M^*f)(0)}}, \frac{p_M^*f}{\sqrt{(p_M^*f)(0)}}\right\rangle_\h \\
&= \langle z p_M^*f, p_M^*f \rangle_\h \\
&= \sum_{k=0}^M a_k \langle z^{k+1} f, p_M^*f \rangle_\h \\
&= a_M \langle z^{M+1} f, p_M^*f \rangle _\h
\end{align*}
where the last equality holds by optimality of $p_M^*$. 
By the minimality of $M$, $a_M \neq 0$ so we must have $\langle z^{M+1} f, p_M^*f \rangle_\h = 0$. A simple induction argument shows that $\langle z^{M+k} f, p_M^*f \rangle_\h = 0$ for all $k \ge 1$. It follows that 
\[
\langle qf, p_M^*f \rangle_\h = q(0)f(0)
\]
for all $q \in \mathcal{P}$. In other words, $p_M^*f$ is the orthogonal projection of $\hat{k}_0$ onto $[f]$, i.e.\editA{,} (3) \editA{holds}. 
\end{proof}

As previously mentioned, much effort has gone into understanding the location of zeros of optimal approximants. We end this section by showing that if the kernel at the origin is cyclic, then stable approximants must have zeros which are outside of the open unit disk.

\begin{cor}\label{inv}
Let $f \in \h$ be cyclic \editB{and suppose that $k_0$ is cyclic in $\h$. If the optimal polynomial approximants to $\hat{k}_0/f$ stabilize at $p_M^*$, then $f = \hat{k}_0/p_M^*$, and $p_M^*$ has no zeros inside $\D$.}
\end{cor}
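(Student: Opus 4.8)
The plan is to read off the structural consequence of stabilization directly from Theorem~\ref{main} and then exploit cyclicity to pin down the projection. First I would invoke the equivalence $(2)\Leftrightarrow(3)$ of Theorem~\ref{main}: since the optimal approximants to $\hat{k}_0/f$ stabilize at $p_M^*$, the function $p_M^*f$ is the orthogonal projection of $\hat{k}_0$ onto $[f]$. This is the only place where the machinery built up in the preceding lemmas is used, so no independent computation is needed at this step.

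The second step uses the hypothesis that $f$ is cyclic, which by definition means $[f]=\h$. Consequently the orthogonal projection of $\hat{k}_0$ onto $[f]$ is simply $\hat{k}_0$ itself, and the previous step yields the identity of functions $p_M^*f=\hat{k}_0$, equivalently $f=\hat{k}_0/p_M^*$. This establishes the first assertion.

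For the claim about zeros I would argue by contradiction, using that $\hat{k}_0$ does not vanish on $\D$. Recall that $\hat{k}_0$ is the unique cyclic inner function, and being cyclic it cannot vanish in $\D$ (as noted in the introduction for cyclic functions in general). Now suppose $p_M^*(z_0)=0$ for some $z_0\in\D$. Evaluating the identity $p_M^*f=\hat{k}_0$ at $z_0$ forces $\hat{k}_0(z_0)=0$, contradicting the non-vanishing of $\hat{k}_0$. Hence $p_M^*$ has no zeros in $\D$.

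The corollary is short precisely because all of the analytic content is front-loaded into Theorem~\ref{main}; the only genuinely new ingredient is the observation that $\hat{k}_0$ inherits non-vanishing from its cyclicity. The one point I would be careful to justify cleanly—and the most likely place to stumble—is the reduction $\Pi_{[f]}(\hat{k}_0)=\hat{k}_0$, which rests entirely on the identification $[f]=\h$ coming from cyclicity rather than on any separate property of the reproducing kernel.
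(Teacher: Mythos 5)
Your proof is correct and essentially matches the paper's: the paper derives $p_M^*f=\hat{k}_0$ directly from the stabilized orthogonality relations $\langle p_M^*f-\hat{k}_0, qf\rangle_\h=0$ plus density of $\{qf:q\in\mathcal{P}\}$, which is exactly your ``$p_M^*f=\Pi_{[f]}(\hat{k}_0)$ and $[f]=\h$'' packaged via Theorem~\ref{main}, and the zero-free argument is identical. The only nit is that you should note, as the paper does, that cyclicity forces $f(0)\neq 0$ so that Theorem~\ref{main} applies.
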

\begin{proof}
Since $f$ is cyclic, $f(0) \neq 0$. By optimality, we have 
\[
\langle p_M^* f, qf \rangle_\h = \langle \hat{k}_0, qf \rangle_\h
\]
for all $q \in \mathcal{P}$. As $f$ is cyclic, $\{qf : q \in \mathcal{P} \}$ is dense in $\h$. It follows immediately that $p_M^*f  = \hat{k}_0$. Lastly, as $\hat{k}_0$ is assumed cyclic, and therefore zero-free in $\D$, and $f$ is analytic in $\D$, $p_M^*$ must not have any zeros in $\D$. 
\end{proof}

\begin{remark}\label{zeros}
For $h\in \h$, let us denote the zero set of $h$ as 
\[
Z(h) := \{ \beta \in \operatorname{Dom}(h) : h(\beta) = 0 \}.
\]
It was shown in \cite{beneteau2016orthogonal} that in the Dirichlet-type spaces $D_\alpha$, $Z(p_n^*) \cap \overline{\mathbb{D}} = \emptyset$ when $\alpha \ge 0$ and $Z(p_n^*) \cap \overline{D}(0, 2^{-\alpha/2}) = \emptyset$ when $\alpha < 0$. The above corollary improves this result for $\alpha < 0$ when $f$ is cyclic and has stabilizing approximants. However, it should be noted that, a priori, $p_m^*$ may have zeros on the unit circle. 
\end{remark}

%As a consequence of Remark $\ref{zeros}$, we have the following proposition, which also follows naturally from the classical Beurling theorem (see \cite{beurling1948}). 
%\editB{\begin{cor}
%Removed.
%Let $f \in H^2$ with $f(0) \neq 0$. Let $(p_n^*)$ be the optimal polynomial approximants to $1/f$. If the optimal approximants to $1/f$ stabilize at $p_M^*$, then $M = 0$. That is, every function in the Hardy space that has stabilizing optimal approximants is a constant multiple of an inner function.
%\end{cor}}

%\begin{proof}
%Recall (e.g. from \cite{partington2001hardy}) that in $H^2$, every inner function $h$ has the form $h = BQ$ where $B$ is a (possibly infinite) Blaschke product and $Q$ is a singular inner function. Namely, if the optimal approximants to $1/f$ stabilize at $p_M^*$, then via Lemma \ref{constant} we have
%\[
%p_M^* f = \sqrt{(p_M^*f)(0)} \ BQ.
%\]
%We know, from Remark \ref{zeros}, that $p_M^*$ must have zeros outside $\overline{\D}$. But as $Z(Q) = \emptyset$ and $Z(B) \subset \D$, we must have $Z(f) = Z(B)$. This now tells us that $p_M^*$ must be zero-free, i.e.\editA{,} $p_M^*$ is constant and therefore $M = 0$. We now conclude that $f$ is a constant multiple of the $H^2$-inner function $BQ$. 
%\end{proof}

\section{General Approximants}\label{g-opas}
We now return to the case of approximating some arbitrary $g/f$ with $g,f \in \h$. Recalling the $\frac1m H^2$ spaces from Section \ref{oneoverm}, which serve as one motivation for studying general approximants, 
we have the following proposition.
\begin{prop}
Let $f \in \frac1m H^2 \setminus \{0\}$. Put $f = h/m$ with $h\in H^2$. Then the optimal polynomial approximants to $1/f$ in $\frac1m H^2$ correspond to the optimal polynomial approximants to $m/h$ in $H^2$. 
\end{prop}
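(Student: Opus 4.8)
The plan is to exploit the isometry between $\frac1m H^2$ and $H^2$ induced by multiplication by $m$. Recall from Section~\ref{oneoverm} that the map $M_m : \frac1m H^2 \to H^2$ given by $F \mapsto mF$ is an isometric isomorphism, so that $\|F\|_{\frac1m H^2} = \|mF\|_{H^2}$ for all $F \in \frac1m H^2$. The key observation is that this isometry carries the subspace $f\mathcal{P}_n \subseteq \frac1m H^2$ to the subspace $(mf)\mathcal{P}_n = h\mathcal{P}_n \subseteq H^2$, because multiplication by a polynomial commutes with multiplication by $m$, and $mf = m(h/m) = h$.

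First I would write out the optimization defining the approximant in $\frac1m H^2$: the $n$th optimal approximant $p_n^*$ to $1/f$ minimizes $\|pf - 1\|_{\frac1m H^2}$ over $p \in \mathcal{P}_n$. Applying the isometry, I would rewrite
\[
\|pf - 1\|_{\frac1m H^2} = \|m(pf - 1)\|_{H^2} = \|p(mf) - m\|_{H^2} = \|ph - m\|_{H^2}.
\]
Since the isometry preserves norms exactly, minimizing the left-hand side over $p \in \mathcal{P}_n$ is identical to minimizing the right-hand side over the same set of polynomials. But $\|ph - m\|_{H^2}$ is precisely the quantity minimized by the $n$th optimal approximant to $m/h$ in $H^2$. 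Because the minimizing polynomial is unique (the projection onto $f\mathcal{P}_n$, respectively $h\mathcal{P}_n$, is unique whenever $f$, respectively $h$, is not orthogonal to the target), the minimizers coincide: the optimal approximant to $1/f$ in $\frac1m H^2$ equals the optimal approximant to $m/h$ in $H^2$, coefficient for coefficient.

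The only genuine bookkeeping, rather than a real obstacle, is confirming that the inner products match under the isometry so that the two optimal systems are literally the same linear system, and checking the non-degeneracy hypothesis $\langle f, 1 \rangle_{\frac1m H^2} \neq 0$ transfers to $\langle h, m \rangle_{H^2} \neq 0$ (again immediate from the isometry, since $\langle f, 1\rangle_{\frac1m H^2} = \langle mf, m \rangle_{H^2} = \langle h, m\rangle_{H^2}$). I expect the cleanest writeup to avoid the explicit optimal systems entirely and argue purely at the level of the norm identity above, invoking uniqueness of the projection; that keeps the proof to a few lines and sidesteps any need to compare matrices $G_n$ across the two spaces.
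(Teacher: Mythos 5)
Your proposal is correct and follows essentially the same route as the paper: the paper's proof also rests entirely on the norm identity $\| pf - 1 \|_{\frac1m H^2} = \| ph - m \|_{H^2}$ for every polynomial $p$, followed by minimizing both sides and invoking uniqueness of the projections. Your additional remarks about the inner products and the non-degeneracy hypothesis are harmless elaboration, not a different argument.
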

\begin{proof}
\editB{Recall that multiplication by $m$ is an isometry from $\frac1m H^2$ to $H^2$,} and notice that for any polynomial $p$ we have
\[
\| pf - 1 \|_{\frac1m H^2} = \| ph - m \|_{H^2}.
\]
Minimizing each side of the equality above we see that 
\[
\left(\Pi_{f\mathcal{P}_n}(1)\right)/f = \left(\Pi_{h\mathcal{P}_n}(m)\right)/h,
\]
where the projections on the left and right hand sides above are taken in $\frac1m H^2$ and $H^2$, respectively.
Lastly, as $f \not\equiv 0$, these projections are unique and represent the optimal approximants. 
\end{proof}
We can now reframe questions about cyclicity in $\frac1m H^2$ as questions in $H^2$ via general optimal approximants. This is advantageous because $H^2$ has nicer structural properties than $\frac1m H^2$ (e.g., the monomials are orthogonal in $H^2$ but not in $\frac1m H^2$). 

Let us now give some results pertaining to \editA{$\h$-inner} functions and general optimal approximants. In general, (Q1) and (Q2) are not equivalent. For example, if $f(z) = 1$ and $g(z) = \sum_{k\ge 0} b_kz^k$, then the optimal approximants to $g/f$ are just $T_n(g)$, since $\Pi_{f\mathcal{P}_n}(g) = \Pi_{\mathcal{P}_n}(g) = T_n(g)$.

\subsection{General Stabilization}
The aim of this section is to provide a stabilization theorem for a certain class of general approximants. 
We will be able to do so with the help of the following proposition, which deals with the orthogonal complement of the subspace generated by $zf$. When $f \in H^2$ is inner, these spaces are examples of \textit{model spaces} (see, e.g., \cite{timotin2015short} for an introduction). 
\begin{prop}\label{modelshift}
Let $f \in \h$ and define
\[
\ksf : = \h \ominus [Sf].
\]
For any $h \in \h$, we have $h \in \ksf$ if and only if $\Pi_{[f]}(h) \in \ksf$. Further, $\hat{k}_0$ and $\Pi_{[f]}(\hat{k}_0)$ are always elements of $\ksf$. 
\end{prop}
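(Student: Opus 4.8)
The plan is to reduce everything to the single containment $[Sf] \subseteq [f]$, after which both assertions follow almost formally. First I would record this containment: since $Sf = zf$ belongs to $[f]$ and $[f]$ is shift invariant, every $z^m f$ with $m \ge 1$ lies in $[f]$, so
\[
[Sf] = \overline{\text{span}\{z^m f : m \ge 1\}}^{\ \h} \subseteq [f].
\]

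The crucial observation is then that for \emph{any} $h \in \h$ the residual $r := h - \Pi_{[f]}(h)$ automatically lies in $\ksf$. Indeed, $r$ is orthogonal to $[f]$ by definition of the orthogonal projection, and hence orthogonal to the smaller subspace $[Sf]$; thus $r \in \ksf$. Since $\ksf$ is a closed subspace and $h = \Pi_{[f]}(h) + r$ with $r \in \ksf$ always, membership of $h$ in $\ksf$ is equivalent to membership of $\Pi_{[f]}(h)$ in $\ksf$: if $\Pi_{[f]}(h) \in \ksf$ then $h$ is a sum of two elements of $\ksf$, while if $h \in \ksf$ then $\Pi_{[f]}(h) = h - r$ is a difference of two elements of $\ksf$. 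This proves the stated equivalence.

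For the last sentence I would verify $\hat{k}_0 \in \ksf$ by a direct computation with the reproducing kernel. Using $\langle u, k_0 \rangle_\h = u(0)$, for each $n \ge 1$ we have $\langle z^n f, k_0 \rangle_\h = (z^n f)(0) = 0$, so that $\langle \hat{k}_0, z^n f \rangle_\h = 0$; hence $\hat{k}_0 \perp [Sf]$, i.e.\ $\hat{k}_0 \in \ksf$. Applying the equivalence just established with $h = \hat{k}_0$ immediately gives $\Pi_{[f]}(\hat{k}_0) \in \ksf$ as well.

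There is no real obstacle here: the whole proposition is a formal consequence of the inclusion $[Sf] \subseteq [f]$ together with the vanishing of $z^n f$ at the origin for $n \ge 1$. The only point meriting a moment's care is that $\ksf$ being a subspace is exactly what lets one transfer membership back and forth across the orthogonal decomposition $h = \Pi_{[f]}(h) + r$, and this hinges on first noticing that $r$ always lands in $\ksf$.
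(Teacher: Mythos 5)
Your proof is correct and is essentially the paper's argument in slightly different clothing: the paper writes $\langle h, z^kf\rangle_\h = \langle \Pi_{[f]}(h), z^kf\rangle_\h$ using self-adjointness of the projection and $z^kf \in [f]$, which is exactly your observation that the residual $h - \Pi_{[f]}(h)$ is orthogonal to $[Sf] \subseteq [f]$. The verification that $\hat{k}_0 \perp z^k f$ for $k \ge 1$ also matches the paper.
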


\begin{proof}
Note that $\ksf$ can also be expressed as 
\[
\ksf = \{ h \in \h : \langle h, z^kf \rangle_\h = 0 \ \text{for all} \  k \ge 1\}.
\] 
Simply observe that $\langle h, z^kf \rangle_\h = \langle h, \Pi_{[f]}(z^kf) \rangle_\h  = \langle \Pi_{[f]}(h), z^kf \rangle_\h$ and that \\
$\langle k_0, z^kf \rangle_\h = 0$ for all $k \ge 1$. 
\end{proof}

We can now prove the main result of this section.

\begin{theorem}\label{gen-main}
Let $f, g \in \h$ with \editB{$g$ not orthogonal to $[f]$}. Let $(q_n^*)$ be the optimal approximants to $g/f$. The following are equivalent, and the smallest $M$ for which each of the statements hold is the same: 
\begin{enumerate}
\item $g \in \ksf$ and $\Pi_{[f]}(g) = q_M^*f$.
\item $q_M^*f \in \ksf$.
\item $q_M^*f/ \|q_M^*f\|_\h$ is \editA{$\h$-inner} and $\langle q_M^*f , z^k f \rangle_\h = 0$ for $k= 1, \ldots, M$.
\end{enumerate}
\end{theorem}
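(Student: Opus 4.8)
The plan is to follow the template of the proof of Theorem \ref{main}, establishing $(1)\Rightarrow(2)$, the equivalence $(2)\Leftrightarrow(3)$, and $(2)\Rightarrow(1)$, while keeping track of the smallest admissible $M$ at each step so that the final sentence of the statement comes for free. The direction $(1)\Rightarrow(2)$ is immediate: if $g\in\ksf$ and $\Pi_{[f]}(g)=q_M^*f$, then Proposition \ref{modelshift} gives $\Pi_{[f]}(g)\in\ksf$, so $q_M^*f\in\ksf$.

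For $(2)\Rightarrow(3)$ I would write $q_M^*(z)=\sum_{i=0}^M a_i z^i$ and, for $j\ge 1$, expand
\[
\langle q_M^*f,\, z^j q_M^*f\rangle_\h=\sum_{i=0}^M \overline{a_i}\,\langle q_M^*f,\, z^{i+j}f\rangle_\h=0,
\]
since every index $i+j\ge 1$ and $q_M^*f\in\ksf$; the case $j=0$ returns $\|q_M^*f\|_\h^2$, so $q_M^*f/\|q_M^*f\|_\h$ is inner, and the finite list of relations $\langle q_M^*f,z^kf\rangle_\h=0$ for $k=1,\dots,M$ is a special case of membership in $\ksf$. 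The converse $(3)\Rightarrow(2)$ is the same induction used in the $(4)\Rightarrow(3)$ step of Theorem \ref{main}: setting $c_k:=\langle q_M^*f,z^kf\rangle_\h$, the hypotheses give $c_1=\dots=c_M=0$ and $\sum_i\overline{a_i}\,c_{i+j}=0$ for all $j\ge 1$, and peeling off these relations one at a time---using that the leading coefficient of $q_M^*$ is nonzero, which is where the minimality of $M$ enters---yields $c_{M+1}=c_{M+2}=\cdots=0$, i.e. $q_M^*f\in\ksf$.

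The main obstacle is $(2)\Rightarrow(1)$. By optimality of $q_M^*$ we have $g-q_M^*f\perp f\mathcal{P}_M$, so $\langle g,z^kf\rangle_\h=\langle q_M^*f,z^kf\rangle_\h$ for $0\le k\le M$; combined with $q_M^*f\in\ksf$ this already forces $\langle g,z^kf\rangle_\h=0$ for $1\le k\le M$. The delicate step is to promote this to $\langle g,z^kf\rangle_\h=0$ for \emph{all} $k>M$, which is exactly what identifies $q_M^*f$ with $\Pi_{[f]}(g)$ (it says $g-q_M^*f\perp z^kf$ for every $k$, hence $g-q_M^*f\perp[f]$). I expect this to be the crux, since the intrinsic condition $q_M^*f\in\ksf$ pins down $\langle q_M^*f,z^kf\rangle_\h$ but says nothing about $\langle g,z^kf\rangle_\h$ for $k>M$; the argument must therefore show that the approximants stabilize past level $M$ (equivalently, that passing from $q_M^*$ to $q_{M+1}^*$ changes nothing), and I would want to check carefully that this does not secretly require additional hypotheses on $g$ beyond what $q_M^*f\in\ksf$ provides. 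Once $\Pi_{[f]}(g)=q_M^*f$ is in hand, the remaining claim $g\in\ksf$ is automatic: $\Pi_{[f]}(g)=q_M^*f\in\ksf$, so Proposition \ref{modelshift} gives $g\in\ksf$.
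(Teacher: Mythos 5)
Your handling of $(1)\Rightarrow(2)$ and of the equivalence $(2)\Leftrightarrow(3)$ is correct and essentially identical to the paper's: the forward direction is the same bilinear expansion of $\langle q_M^*f, z^jq_M^*f\rangle_\h$ over the coefficients of $q_M^*$, and the converse is the same device as in the proof of Theorem \ref{main} (minimality of $M$ forces the top coefficient to be nonzero, which lets you peel off $\langle z^{M+1}f,q_M^*f\rangle_\h=0$ and then induct). Invoking Proposition \ref{modelshift} for $(1)\Rightarrow(2)$ is a slightly cleaner packaging of what the paper does by direct computation.

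The genuine gap is the one you flagged yourself and then left open: you never prove the implication back to (1), so the proposal is incomplete. The difficulty you isolate is real: $q_M^*f\in\ksf$ controls $\langle q_M^*f,z^kf\rangle_\h$ for all $k\ge1$, and optimality identifies $\langle g,z^kf\rangle_\h$ with $\langle q_M^*f,z^kf\rangle_\h$ only for $k\le M$, so nothing in (2) constrains $\langle g,z^kf\rangle_\h$ for $k>M$ --- and those are exactly the inner products needed to conclude $g-q_M^*f\perp[f]$. For comparison, the paper closes this direction with the single sentence ``if $q_M^*f\in\ksf$, then $g-q_M^*f=g-\Pi_M(g)$ is orthogonal to $[f]$,'' which is precisely the assertion you were worried about and is given no further justification there. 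Your skepticism is warranted: with $\h=H^2$, $f=1$, $g=1+z$, one has $q_0^*f=1\in\ksf=\text{span}\{1\}$ and $q_0^*f/\|q_0^*f\|_\h$ inner (the conditions for $k=1,\dots,M$ being vacuous at $M=0$), yet $\Pi_{[f]}(g)=1+z\neq q_0^*f$ and $g\notin\ksf$; so (2) and (3) hold at $M=0$ while (1) holds for no $M$. The step therefore cannot be filled as stated --- some additional hypothesis (for instance that the approximants actually stabilize at $q_M^*$, or a priori control of $\langle g,z^kf\rangle_\h$ for $k>M$) is needed, and neither your sketch nor the paper's argument supplies it.
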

\begin{proof}
To see (1) implies (2), note that if $\Pi_{[f]}(g) = q_M^*f$ then $\langle q_M^*f, z^k f \rangle_\h = \langle g, z^kf \rangle_\h$. So if $g \in \ksf$, then $\langle q_M^*f, z^k f \rangle_\h = 0$ for all $k \ge 1$.

For (2) implies (3), the fact that $\langle q_M^*f , z^k f \rangle_\h = 0$ for $k= 1, \ldots, M$ follows by definition of $q_M^*f \in \ksf$. To see $q_M^*f/ \|q_M^*f\|_\h$ is \editA{$\h$-inner}, put $q_M^*(z) = \sum_{j=0}^M b_kz^j$ and observe, for all $k \ge1$,
\[
\langle q_M^*f, z^kq_M^*f \rangle_\h = \sum_{j=0}^M \overline{b_j} \langle q_M^*f, z^{j+k}f \rangle_\h = 0
\]
where the second equality holds because $q_M^*f \in \ksf$.
Thus, $q_M^*f/ \|q_M^*f\|_\h$ is \editA{$\h$-inner}. Further, the unique minimality of $M$ in the above statements is immediate. 

For (3) implies (1), we use the same idea as the last part of Theorem \ref{main}.
Put $q_M^*(z) = \sum_{j=0}^M b_k z^j$ and assume $M$ is minimal. Since $q_M^*f/\| q_M^*f\|_\h$ is \editA{$\h$-inner}, we have 
\begin{align*}
0 &= \langle z q_M^*f, q_M^*f \rangle_\h \\
&= \sum_{j=0}^M b_k \langle z^{j+1} f, q_M^*f \rangle_\h \\
&= b_M \langle z^{M+1} f, q_M^*f \rangle_\h
\end{align*}
where the last equality holds by the assumption that $q_m^*f$ is orthogonal to $z^kf$ for $k = 1, \ldots, M$. 
By the minimality of $M$, $b_M \neq 0$ so we must have $\langle z^{M+1} f, q_M^*f \rangle_\h = 0$. A simple induction argument shows that $\langle z^{M+k} f, q_M^*f \rangle_\h = 0$ for all $k \ge 1$. Thus, $q_M^*f \in \ksf$. Further, if $q_M^*f \in \ksf$, then $g - q_M^*f = g - \Pi_M(g)$ is orthogonal to $[f]$. It follows that $\Pi_{[f]}(g) = q_M^*f $, thus the approximants to $g/f$ stabilize at $q_M^*$.
Lastly, $g \in \ksf$ since now $\langle q_M^*f, z^kf \rangle_\h = \langle g, z^kf \rangle_\h = 0$ for all $k\ge 1$.
\end{proof}

\editB{When $k_0$ is cyclic in $\h$,} we can also characterize cyclicity in terms of $\ksf$. 
\begin{prop}
Let $f \in \h$ \editB{and suppose that $k_0$ is cyclic in $\h$}. Then $f$ is cyclic if and only if $\ksf = \operatorname{span}\{k_0\}$.
\end{prop}
\begin{proof}
Suppose $f$ is cyclic. Then for any $h \in \h$, we can find polynomials $(p_n)$ so that $p_nf \to h$. Letting $g \in \ksf$ we have 
\begin{align*}
\langle g, h \rangle_\h &= \lim_{n \to \infty} \ \langle g, p_nf \rangle_\h \\
&=\lim_{n \to \infty} \overline{(p_nf)(0)} \ \langle g , 1 \rangle_\h\\
&= \overline{h(0)} \  \langle g , 1 \rangle_\h.
\end{align*}
Thus, $g$ reproduces, up to a constant, the value of $h$ at zero so $g \in \operatorname{span}\{k_0\}$.

Conversely, let $\ksf = \text{span}\{k_0\}$. Since $\Pi_{[f]}(k_0) \in \ksf$, there exists some constant $\lambda$ so that $\Pi_{[f]}(k_0) = \lambda k_0$. This means that the cyclic function $k_0 \in [f]$ so $f$ is cyclic. 
\end{proof}
One may compare this with the well-known ``codimension one" property of shift invariant subspaces (e.g., see \cite{richter1988bounded}). 

\section{Projections of Unity}\label{projofunity}

In light of Proposition \ref{modelshift}, we will compute $\Pi_{[f]}(1)$ (i.e., a projection of unity) when $f \in \mathcal{P}\subset H^2_w$. Note that in our definition of $H^2_w$ from Section \ref{whs}, we have $\hat{k}_0 = 1$. 

\editB{As we will see, these projections are linear combinations of reproducing kernels. 
This idea goes back to a construction of Shapiro and Shields in the Bergman space \cite{MR145082} involving certain Gram determinants, later modified by the authors in \cite{beneteau2017remarks} to produce examples of $H^2_w$-inner functions. The inner functions constructed there are associated to a finite set of distinct points in the open unit disk. We would like to generalize this theory by considering finite sets of points in the plane with any multiplicity. 
This motivates the following definition.} 

\begin{definition}[Reproducible point]
Let $\beta \in \C$ and $m \in \mathbb{Z}^+ \cup \{\infty \}$. Say that $\beta$ is reproducible of order $m$ in $H^2_w$ if point evaluation at $\beta$ of the $m$-th derivative of functions in $H^2_w$ is bounded. If no such $m$ exists, say that $\beta$ is not reproducible. Denote the collection of reproducible points of order $m$ for $H^2_w$ as $\Omega_m(H^2_w)$.
\end{definition}

Notice that $\Omega_0(H^2_w)$ is just the set of points for which point evaluation is bounded in $H^2_w$. Since $H^2_w$ is a reproducing kernel Hilbert space on $\D$, we always have $\D \subseteq \Omega_0(H^2_w)$. But $\Omega_0(H^2_w)$ could be a strictly larger set.
For example, a routine exercise shows that when $\alpha > 1$, $\Omega_0 (\mathcal{D}_\alpha) = \overline{\D}$ and $\Omega_m(\mathcal{D}_\alpha) \subseteq \overline\D$ for all $m \ge 1$ (the proper inclusion depending on $m$ and $\alpha$). If $|\beta| > 1$, then $\beta$ is not reproducible in $\mathcal{D}_\alpha$. 
In $H^2$, $\Omega_m(H^2) = \D$ for all $m$.

We need one more observation and lemma before stating our last theorem. Let $s_\beta(z) = \frac{1}{1-\bb z}$ denote the Szeg\H{o} kernel, which is the reproducing kernel in $H^2$. Let $s_\beta^{(n)}$ denote the $n$-th derivative of $s_\beta$ and let $s_\beta^n$ denote the reproducing kernel for $n$-th derivatives in $H^2$, i.e.\editA{,} $\langle f, s_\beta^n \rangle_{H^2} = f^{(n)}(\beta)$ for all $f \in H^2$. Such an element exists because $f$ is assumed to be analytic, and is unique by the Riesz representation theorem. A simple exercise shows that 
\[
s_\beta^{(n)}(z) = \sum_{j\ge 0} (j+1)(j+2)\ldots(j+n)\bb^{j+n}z^j
\]
and 
\[
s_\beta^n(z) = \sum_{k \ge 0} j(j-1)\ldots(j-n+1) \bb^{j-n}z^j.
\]
Further, in $H^2_w$, we have
\[
k_\beta^{(n)}(z) = \sum_{j\ge 0} (j+1)(j+2)\ldots(j+n)\frac{\bb^{j+n}z^j}{w_j}
\]
and 
\[
k_\beta^n(z) = \sum_{j \ge 0} j(j-1)\ldots(j-n+1) \frac{\bb^{j-n}z^j}{w_j}.
\]
Let us now relate the Maclaurin series coefficients of $s_\beta^{(n)}$ and $s_\beta^n$.

\begin{lemma}\label{span}
Let $F_0(j) = P_0(j) = 1$. For each $N \in \mathbb{Z}^+$, define $F_N(j):= \prod_{n=1}^N (j+n)$ and $P_N(j) := \prod_{n=0}^{N-1} (j-n)$. Then $F_N \in \text{span}\{P_0, \ldots, P_N\}$ for all $N \in \mathbb{Z}^+ \cup \{0\}$. 
\end{lemma}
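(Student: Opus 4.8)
The plan is to treat both families as polynomials in the single variable $k$ and reduce the claim to an elementary fact about bases of polynomial spaces. First I would observe that, as a polynomial in $k$, the falling factorial $P_N(k) = k(k-1)\cdots(k-N+1)$ has degree exactly $N$ with leading coefficient $1$, and that the same is true of $F_N(k) = (k+1)(k+2)\cdots(k+N)$. Thus the list $P_0, P_1, \ldots, P_n$ consists of one monic polynomial of each degree $0, 1, \ldots, n$.

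Next I would invoke the standard observation that any collection of nonzero polynomials with pairwise distinct degrees is linearly independent: in a nontrivial vanishing combination the top-degree term comes from a single summand and cannot be cancelled. Since $\{P_0, \ldots, P_n\}$ has $n+1$ elements and lies in the $(n+1)$-dimensional space $\mathcal{P}_n$ of polynomials (now read in the variable $k$) of degree at most $n$, linear independence forces this set to be a basis of $\mathcal{P}_n$.

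Finally, because $F_n$ is itself a polynomial in $k$ of degree $n$, it belongs to $\mathcal{P}_n$ and hence to $\text{span}\{P_0, \ldots, P_n\}$, which is exactly the assertion; the case $n=0$ is the trivial identity $F_0 = P_0 = 1$.

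I do not expect any genuine obstacle here, since the content is purely the degree bookkeeping of the two factorial-type products. The only point that warrants a moment's care is confirming that each $P_N$ really attains degree $N$, so that the degrees are genuinely distinct and the count matches $\dim \mathcal{P}_n$; this is immediate from the monic leading term. If one later wants the explicit expansion coefficients of $F_n$ in terms of $P_0, \ldots, P_n$ (for instance, to compare the series for $s_\beta^{(n)}$ and $s_\beta^n$), they can be produced by an induction that subtracts the leading term $P_n$ from $F_n$ and recurses on the degree-$(n-1)$ remainder, but no such computation is needed merely to establish membership in the span.
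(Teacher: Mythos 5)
Your argument is correct, but it takes a different route from the paper's. The paper proves the lemma by induction on $N$: it verifies $F_1 = P_1 + P_0$ directly, then uses the factorization $F_{N+1}(k) = (k+N+1)F_N(k)$ together with the identity $kP_n(k) = P_{n+1}(k) + nP_n(k)$ to push the expansion up one degree at a time. You instead observe that $P_0, \ldots, P_n$ are monic of the distinct degrees $0, \ldots, n$, hence form a basis of the $(n+1)$-dimensional space of polynomials of degree at most $n$ in $k$, and that $F_n$ lives in that space. Your dimension-counting argument is shorter and proves something strictly stronger --- \emph{every} polynomial of degree at most $n$ lies in $\mathrm{span}\{P_0,\ldots,P_n\}$, not just $F_n$ --- whereas the paper's induction is constructive and yields an explicit recursion for the coefficients (which, as you note, could matter if one wanted the exact expansion of $s_\beta^{(n)}$ in terms of $s_\beta, s_\beta^1, \ldots, s_\beta^n$, though the theorem that uses the lemma only needs membership in the span). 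Either proof suffices for the application in Theorem \ref{projun}.
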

\begin{proof}
We will proceed by induction. Let $N=1$ and observe $F_1(j) = j+1 = P_1(j) + P_0(j)$, so the base case holds. Now suppose $F_N \in \text{span}\{P_0, \ldots, P_N\}$ and note that $F_{N+1}(j) = (j+N+1) F_N(j)$. 
By the induction hypothesis, we can find constants $c_i$ such that 
\begin{align*}
F_{N+1}(j) &= (j+N+1) F_N(j)\\
&= j\sum_{i=0}^N c_i P_i(j) + (N+1)\sum_{i=0}^N c_i P_i(j). 
\end{align*}
Observe, for any $n \ge 0$, that $jP_n(j) = (j-n)P_n(j) + nP_n(j) = P_{n+1}(j) + nP_n(j)$. Hence,  $jP_n \in \text{span}\{P_0, \ldots, P_{n+1} \}$ and also $j\sum_{i=0}^N c_i P_i \in \text{span}\{ P_0, \ldots, P_{N+1} \}$. Thus, $F_{N+1} \in \text{span}\{P_0, \ldots, P_{N+1}\}$.
\end{proof}

\begin{remark}
The purpose of this lemma, as an immediate corollary, is  that 
\[
s_\beta^{(n)} \in \text{span}\{ s_\beta, s_\beta^1, \ldots, s_\beta^n\}.
\]
\end{remark}
We may now state and prove our final theorem, \editB{and mention again that inner functions arising as linear combinations of reproducing kernels goes back to work of Shapiro and Shields \cite{MR145082}.}

\begin{theorem}\label{projun}
Let $f \in H^2_w$ be a monic polynomial with $f(0) \neq 0$. Suppose $f$ has zeros $\beta_1, \ldots, \beta_r$ with multiplicities $m_1, \ldots, m_r$, respectively. Let $Z_j := \{ \beta_i \in Z(f) \cap \Omega_j : m_i > j \}$ be the set of zeros of $f$ that are reproducible of order $j$ and have multiplicity greater than $j$. Let $I_j := \{ i \in \{ 1, \ldots, r\} : \beta_i \in Z_j \}$ be the set of indices appearing in $Z_j$. Let $R:= \max(\{ j: Z_j \neq \emptyset\})$ be the largest value of $j$ such that $Z_j$ is non-empty.
Let $\varphi$ be the orthogonal projection of $1$ onto $[f]$. Then  
\[
\varphi(z) = 1 + \sum_{j=0}^{R} \  \sum_{i \in I_j} C_{i,j} k_{\beta_i}^{j}(z),
\]
where $k_\beta^i$ denotes the reproducing kernel for $i$-th derivatives in $H^2_w$ at $\beta$ and $C_{i,j}$ are constants determined by $\langle \varphi, k_{\beta_i}^j \rangle_w = 0$ for each $i \in I_j$ and $0 \le j \le R$. 
\end{theorem}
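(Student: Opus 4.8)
The plan is to identify $[f]^{\perp}$ explicitly; the theorem then follows by reading off its distinguished element $1-\varphi$. Since $\varphi=\Pi_{[f]}(1)$, we have $1-\varphi\in[f]^{\perp}$, so everything reduces to proving
\[
[f]^{\perp}=\text{span}\{\,k_{\beta_i}^{j}:\beta_i\in Z_j,\ 0\le j\le R\,\}
\]
together with linear independence of these kernels. The latter is standard, since the functionals $g\mapsto g^{(j)}(\beta_i)$ are independent across distinct points and distinct orders, so their Gram matrix is invertible.

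One inclusion is immediate. If $\beta_i\in Z_j$ then $\beta_i\in\Omega_j$, so $k_{\beta_i}^{j}$ is a genuine element of $H^2_w$, and $m_i>j$; as every $pf\in f\mathcal{P}$ vanishes to order $m_i$ at $\beta_i$, we get $\langle pf,k_{\beta_i}^{j}\rangle_w=(pf)^{(j)}(\beta_i)=0$, and boundedness of the $j$-th derivative evaluation at $\beta_i$ lets this pass to the closure, giving $k_{\beta_i}^{j}\in[f]^{\perp}$. For the reverse inclusion I would pass to the multiplication operator $M_f=f(S)$, which is bounded since $S$ is. Density of $\mathcal{P}$ gives $[f]=\overline{f\mathcal{P}}=\overline{\operatorname{Ran}M_f}$, whence $[f]^{\perp}=\ker M_f^{*}$. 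Writing $f(z)=\prod_{i=1}^{r}(z-\beta_i)^{m_i}$ we have $M_f^{*}=\prod_{i=1}^{r}(S^{*}-\overline{\beta_i})^{m_i}$, and since the $\overline{\beta_i}$ are distinct, primary decomposition gives $\ker M_f^{*}=\bigoplus_{i=1}^{r}\ker(S^{*}-\overline{\beta_i})^{m_i}$. It therefore suffices to identify each generalized eigenspace of $S^{*}$.

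This identification is the step I expect to be the main obstacle, since it is where the reproducibility sets $\Omega_j$ enter. A short computation with the reproducing property shows $(S^{*}-\overline{\beta})k_{\beta}^{j}=j\,k_{\beta}^{j-1}$, so the reproducing kernels for derivatives form a Jordan chain and $k_{\beta}^{j}\in\ker(S^{*}-\overline{\beta})^{j+1}$. Solving $(S^{*}-\overline{\beta})^{m}h=0$ at the level of Maclaurin coefficients — equivalently, solving the recurrence $\sum_{l}\overline{f_l}\,w_{n+l}c_{n+l}=0$ that comes directly from $h\perp z^{n}f$ — shows the formal solution space is $m$-dimensional and spanned by the series $k_{\beta}^{0},\dots,k_{\beta}^{m-1}$; here Lemma \ref{span} is what permits expressing these solutions through the reproducing kernels $k_{\beta}^{j}$ rather than through derivatives $k_\beta^{(j)}$ of the kernel. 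The delicate point is deciding which combinations $\sum_{j}a_j k_{\beta}^{j}$ genuinely lie in $H^2_w$: because $k(k-1)\cdots(k-j+1)\sim k^{j}$, the highest-order surviving term dominates the tail and cannot be cancelled, while reproducibility is monotone in the sense that $\beta\in\Omega_J$ forces $\beta\in\Omega_j$ for $j\le J$. Consequently $\ker(S^{*}-\overline{\beta})^{m}=\text{span}\{k_{\beta}^{j}:0\le j<m,\ \beta\in\Omega_j\}$, which for $\beta=\beta_i$ is exactly $\text{span}\{k_{\beta_i}^{j}:\beta_i\in Z_j\}$. Summing over $i$ yields the desired description of $[f]^{\perp}$, with $R$ the largest order that occurs.

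With $[f]^{\perp}$ in hand, $1-\varphi\in[f]^{\perp}$ immediately gives $\varphi=1+\sum_{j=0}^{R}\sum_{i\in I_j}C_{i,j}k_{\beta_i}^{j}$ for suitable scalars. Finally I would pin down the $C_{i,j}$: since $\varphi\in[f]$ while each $k_{\beta_i}^{j}\in[f]^{\perp}$, orthogonality forces $\langle\varphi,k_{\beta_i}^{j}\rangle_w=0$ for every admissible pair, and substituting the expansion (using $\langle 1,k_{\beta_i}^{j}\rangle_w=\delta_{j0}$) turns this into a linear system whose coefficient matrix is the Gram matrix of the $k_{\beta_i}^{j}$. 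That matrix is invertible by the independence noted at the outset, so the constants exist and are unique, completing the argument.
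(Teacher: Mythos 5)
Your argument is correct, but it takes a genuinely different route from the paper's. The paper computes $\varphi$ head-on: since $\varphi\in\ksf$, the weighted coefficients $\Phi_n=w_n\varphi_n$ satisfy a constant-coefficient recurrence, whose generating function $\Phi(z)=p(z)/\prod_i(1-\overline{\beta_i}z)^{m_i}$ is expanded by long division and partial fractions; Lemma \ref{span} converts derivatives of Szeg\H{o} kernels into kernels for derivatives, divergent terms are discarded, and the leading constant $C=1$ is pinned down by a separate projection argument. You instead dualize: you identify $[f]^{\perp}=\ker M_f^{*}$, split it by primary decomposition into the generalized eigenspaces $\ker(S^{*}-\overline{\beta_i})^{m_i}$ (legitimate, since this is pure algebra via B\'ezout and the factors commute), and identify each eigenspace as $\text{span}\{k_{\beta_i}^{j}: j<m_i,\ \beta_i\in\Omega_j\}$ using the Jordan-chain relation $(S^{*}-\overline{\beta})k_{\beta}^{j}=j\,k_{\beta}^{j-1}$ together with the same recurrence-plus-convergence filter; your growth and monotonicity argument for which formal combinations land in $H^2_w$ is the same point the paper handles by discarding divergent kernels, and it is sound here because $f(0)\neq 0$ keeps every $\beta_i$ away from the origin. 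The formula $\varphi=1+\sum_{j}\sum_{i\in I_j} C_{i,j}k_{\beta_i}^{j}$ then drops out of $1-\varphi\in[f]^{\perp}$, which gives you the constant term $1$ for free --- cleaner than the paper's $C\varphi=\varphi$ step --- and your derivation of the defining conditions $\langle\varphi,k_{\beta_i}^{j}\rangle_w=0$ from $\varphi\perp[f]^{\perp}$, plus invertibility of the Gram matrix, even adds uniqueness of the constants. What your route buys is that the structural fact the paper relegates to a remark, namely $\ksf=\text{span}\{1,\,k_{\beta}^{j}:\beta\in Z_j\}$, becomes the actual engine of the proof; what the paper's route buys is an explicit generating-function recipe for the numerator $p(z)$, and hence for $\varphi$, without solving the Gram system abstractly.
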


\begin{proof}
Put $f(z) = z^d + a_{d-1}z^{d-1} + \dots + a_0$ and denote the \editB{Maclaurin} coefficients of $\varphi$ as \editB{$\varphi_n = \langle \varphi, z^n \rangle_w / \| z^n \|_w^2$}. 
Since $\varphi \in \ksf$, we have $\langle \varphi, z^{n+d} + a_{d-1}z^{n + d-1} + \dots + a_0 z^n \rangle_w = 0$ for all $n \ge 1$. This gives the recurrence relation 
\[
w_{n+d} \varphi_{n+d} = \sum_{j = 0}^{d-1} -w_{n+j} \  \overline{a}_j  \ \varphi_{n+ j}. 
\]
Now let us use $\Phi_n : = w_n \varphi_n$ to obtain the constant coefficient recurrence relation
\[
\Phi_{n+d} = \sum_{j = 0}^{d-1}   -\overline{a}_j  \Phi_{n + j}. 
\]
We will now find the generating function $\Phi (z)$ (viewed as a \textit{formal} power series) by summing over all $n$ (see, e.g., \cite[Chapter~2]{erickson2013introduction} for more on solving recurrence relations and generating functions): 
\begin{align*}
\Phi(z) &= p(z) + \sum_{n \ge 0} -\overline{a_{d-1}}\Phi_{n-1}z^n + \dots + \sum_{n \ge 0} -\overline{a_0}\Phi_{n-d}z^n \\
& = p(z) - \overline{a_{n-1}}z\Phi(z) - \dots - \overline{a_0}z^d \Phi(z).
\end{align*}
where $p$ is a polynomial of degree $d$ given by the initial conditions of the relation.
Solving for $\Phi(z)$ gives 
\begin{align*}
\Phi(z) &= \frac{p(z)}{1 + \overline{a_{d-1}}z + \dots \overline{a_0}z^d} \\
&= \frac{p(z)}{z^d \overline{f(1/\overline{z})}}\\
&= \frac{p(z)}{\prod_{i = 1}^r (1 - \overline{\beta_i}z)^{m_i}}.
\end{align*}
After doing long division (because $\deg{p} = d$) and using partial fractions, with constants $C$ and $ c_{i,j}$, we may put
\begin{align*}
\Phi(z) &= C + \sum_{i = 1}^{r} \sum_{j = 1}^{m_i} \frac{c_{i,j}}{(1 - \overline{\beta_i}z)^j}\\
&= C + \sum_{i = 1}^{r} \sum_{j = 1}^{m_i} \frac{c_{i,j}}{\overline{\beta_i}^{j - 1}(j-1)!} \ \editB{D^{j-1}}\left(\frac{1}{1 - \overline{\beta_i}z}\right),
\end{align*}
\editB{where $D$ is the derivative operator with respect to $z$.}
Putting $\tilde{C}_{i,j} = \frac{c_{i,j}}{\overline{\beta_i}^{j - 1}(j-1)!}$ and substituting in with terms of $s_\beta^{(j)}$, we get
\[
\Phi(z) = C +  \sum_{i = 1}^{r} \sum_{j = 1}^{m_i} \tilde{C}_{i,j} s^{(j-1)}_{\beta_i}(z). 
\]
By Lemma \ref{span}, we can find constants $C_{i,j}$ such that
\[
\Phi(z) = C +  \sum_{i = 1}^{r} \sum_{j = 1}^{m_i} C_{i,j} s^{j-1}_{\beta_i}(z). 
\]
The upshot of going through the trouble of writing $\Phi$ in this way is that when substituting back in with $\varphi_n$, each term of the form $s^{j-1}_{\beta_i}$ becomes $k^{j-1}_{\beta_i}$. Doing so, we find the formal power series
\[
\tilde{\varphi}(z) = C +  \sum_{i = 1}^{r} \sum_{j = 1}^{m_i} C_{i,j} k^{j-1}_{\beta_i}(z).
\]
In order to find $\varphi$, we must determine which terms above converge in $H^2_w$. This is precisely when $\beta_i \in Z_j$, for appropriate $i,j$. Namely,
\[
\varphi(z) = C + \sum_{j=0}^{R} \  \sum_{i \in I_j} C_{i,j} k_{\beta_i}^{j}(z).
\]
Lastly, the claim about the constants follows by noting that any function in $[f]$ must vanish, with proper multiplicity, at the reproducible zeros of $f$.
If we let $F:=\sum_{j=0}^{R} \  \sum_{i \in I_j} C_{i,j} k_{\beta_i}^{j}$ and note that $\Pi_{[f]}F = 0$, then $\varphi = \Pi_{[f]}\varphi = \Pi_{[f]}C + \Pi_{[f]}F =  C\varphi$, so $C = 1$. As $\varphi \in [f]$, the other constants $C_{i,j}$ can also be determined by using the fact that $\varphi^{(j)}(\beta) = \langle \varphi, k_\beta^j \rangle_w = 0$ for $\beta \in Z_j$.
\end{proof}

\begin{remark}
The above theorem shows something stronger than what is stated; we have actually shown that $\ksf = \text{span}\{1, k_\beta^j : \beta \in Z_j\}$. Example \ref{simple} below gives an explicit linear system whose solution gives the constants appearing in $\varphi$ when $f$ has simple zeros.  An immediate corollary of the above theorem is that if $f, q \in \mathcal{P} \subset H^2_w$ with $Z(q) \cap \left(\cup_{m\ge 0} \Omega_m \right) = \emptyset$, then $\Pi_{[f]}(1) = \Pi_{[qf]}(1)$. This also tells us that the optimal approximants to $1/f$ and $1/(qf)$ form an equivalence class with respect to the limit of their approximants. That is, the equivalence $f \sim h$ if and only if $\Pi_{[f]}(1) = \Pi_{[h]}(1)$. We will call this the Roman equivalence relation; the approximants of two different functions in the same equivalence class travel along different roads, but end up in the same place. Another observation worth noting is that \editB{$\text{dist}^2(1, [f]) = 1- \varphi(0) = \sum_i C_{i,0}$}. This is due to the fact that $k_\beta(0) = 1$ and $k_\beta^j(0) = 0$ for all $j \ge 1$. 
\end{remark}

\section{Examples, Further Questions, and Discussion}
%Distribution and location of zeros of approximants are important in engineering applications. In \cite{beneteau2016orthogonal} and \cite{beneteau2016zeros}, the location and distribution of zeros of optimal approximants were studied. One outstanding task is to obtain lower bounds on the moduli of zeros of a stabilized optimal approximant. Preliminary examples suggest that if the approximants to $1/f$ stabilize at $p_M^*$, then $p_M^*$ has no zeros inside $\D$, regardless of the space. 

\editB{An immediate corollary of Theorem \ref{projun} is that a polynomial is cyclic in $\h$ if and only if it has no reproducible zeros. 
A natural desire would be to extend Theorem \ref{projun} to any function, not just a polynomial. 
Such an extension could possibly provide new information helpful for understanding cyclicity. 

We also mention again the hypotheses of Proposition \ref{cyclic-inner}; it is not known if the existence assumption is needed. Namely, one may ask, what additional hypotheses, if any, are required of $\h$ so that $k_0$ is cyclic? }

\editA{We conclude with a couple of examples. It is well known that for a function $f \in H^2$, $\Pi_{[f]}(1)$ is precisely the inner part of $f$. The first of these examples communicates this by applying Theorem \ref{projun}.}

\begin{example}
Let us consider  $f(z) = \prod_{i=1}^d (z - \beta_i) \in H^2$ with $f(0) \neq 0$. Let $\Omega = Z(f) \cap \D$. Since $\Omega_m(H^2) = \D$ for all $m \ge 0$, we know from Theorem \ref{projun} that $\varphi:= \Pi_{[f]}1$ is given by 
\[
\varphi(z) = \frac{p(z)}{\prod_{\beta \in \Omega}(1 - \overline{\beta}z)}.
\]
We also know that $p$ must vanish at each $\beta \in \Omega$ so we get, for some constant $C$, 
\[
\varphi(z) = C \prod_{\beta \in \Omega} \frac{(z - \beta)}{(1 - \overline{\beta}z)}.
\]
The fact that $\langle \varphi , \varphi \rangle  = \varphi(0)$ implies 
\[ 
|C|^2 \ \left\| \prod_{\beta \in \Omega} \frac{(z - \beta)}{(1 - \overline{\beta}z)} \right\|^2_{H^2} = C\ \prod_{\beta \in \Omega} (-\overline{\beta}).
\]
In turn, we have $C = \prod_{\beta \in \Omega} (-\overline{\beta})$.
This gives $\varphi$ as a multiple of a familiar Blaschke product (an $H^2$-inner function):
\[
\varphi(z) =  \prod_{\beta \in \Omega}(-\overline{\beta}) \frac{(z - \beta)}{(1 - \overline{\beta}z)}.
\]
\editB{This also tells us that $\operatorname{dist}^2(1, [f]) = 1 - \varphi(0) = 1 - \prod_{ \beta \in \Omega}|\beta|^2$. Further, when $\Omega$ is empty, we have that $\varphi  \equiv 1$, and $f$ is cyclic. }
\end{example}

\editA{Our last example is another application of Theorem \ref{projun}, which points out a computational improvement; finding an optimal approximant requires solving a linear system, while, in the polynomial case, invoking Theorem \ref{projun} allows us to compute the \textit{limit} of optimal approximants by solving a linear system. }

\begin{example}\label{simple}
Suppose $f \in H^2_w$ is a monic polynomial \editB{with} simple zeros and $f(0) \neq 0$. Let $\{ \beta_i \}_{1}^{d} = Z(f) \cap \Omega_0(H^2_w)$. Theorem \ref{projun} says the orthogonal projection of $1$ onto $[f]$ is given by
\[
\varphi(z) = 1 + \sum_{i = 1}^d C_i k_{\beta_i}(z)
\]
for some constants $C_i$. 
Since $\varphi$ vanishes at each $\beta_i$, we get, for $1 \le j \le d$, 
\[
 0 = \varphi(\beta_j) 
 = 1 + \sum_{i = 1}^d C_i k_{\beta_i}(\beta_j) 
 = 1 + \sum_{i = 1}^d C_i \langle k_{\beta_i}, k_{\beta_j} \rangle_w.
\]
Moving the independent term 1 to the left-hand side in each equation expressed above gives the linear system
\[
\left( \langle k_{\beta_i}, k_{\beta_j} \rangle_w \right)_{1 \le i,j \le d} \left( C_1, \ldots, C_d \right)^T = \left( -1, \ldots, -1\right)^T.
\]
\end{example}

\subsection*{Acknowledgment}
Many thanks to John McCarthy, Catherine B{\'e}n{\'e}teau, and Daniel Seco for helpful discussion. Additional thanks to the referees, whose comments greatly improved the content and presentation of this work.

\bibliographystyle{abbrv}
\bibliography{CFelder_OPA_VF}

\end{document}